\crefname{lemma}{Lemma}{Lemmas}
\crefname{corollary}{Corollary}{Corollaries}
\crefname{theorem}{Theorem}{Theorems}
\crefname{equation}{Equation}{Equations}
\crefname{example}{Example}{Examples}
\crefname{section}{Section}{Sections}
\crefname{subsection}{Section}{Sections}
\def\xx{{\EM{\mbf{x}}}}
\def\yy{{\EM{\mbf{y}}}}
\def\zz{{\EM{\mbf{z}}}}
\def\Aut{{\mathrm{{Aut}}}}
\def\Lempty{\EM{\mc{L}}}
\def\Lomega#1{\EM{\Lempty_{#1, \w}}}
\def\Lkappalambda#1#2{{\EM{\Lempty_{#1, #2}}}}
\def\Liw{\Lomega{\infty}}
\def\Lik{{\EM{\Lkappalambda{\infty}{\kappa}}}}
\newcommand\rest[1][]{\EM{\!\restriction_{#1}}}
\newcommand{\defn}[1]{{\bf{#1}}}
\def\ZFC{\text{ZFC}}
\def\Set{\text{SET}}
\newcommand{\Lowenheim}{\text{L\"{o}wenheim}}
\newcommand{\LS}{\text{\Lowenheim-Skolem}}
\DeclareMathOperator{\ORD}{ORD}
\DeclareDocumentCommand{\forces}{d[]}
{
\IfNoValueTF{#1}
	{
	\EM{\Vdash}
	}
	{
	\EM{\Vdash_{#1}}
	}
}
\DeclareDocumentCommand{\dual}{d()}
{
\IfNoValueTF{#1}
	{
	\EM{\hat{\ }}
	}
	{
	\EM{\hat{#1}}
	}
}
\DeclareDocumentCommand{\SizeAS}{d()}
{
\IfNoValueTF{#1}
	{
	\EM{|\cdot|}
	}
	{
	\EM{|#1|}
	}
}
\DeclareDocumentCommand{\compcK}{d[]}
{
\IfNoValueTF{#1}
	{
	\EM{\mathbb{K}}
	}
	{
	\EM{\mathbb{K}[#1]}
	}
}
\DeclareDocumentCommand{\AgeK}{d<> d[] d()}
{
\IfNoValueTF{#1}
{
    \IfNoValueTF{#2}
    {
        \IfNoValueTF{#3}
        {
            \EM{\mbf{K}}%
        }
        {
            \EM{\mbf{K}^{#3}}%
        }
    }
    {
        \IfNoValueTF{#3}
        {
            \EM{\mbf{K}[#2]}%
        }
        {
            \EM{\mbf{K}^{#3}[#2]}%
        }
    }
}
{
    \IfNoValueTF{#2}
    {
        \IfNoValueTF{#3}
        {
            \EM{\mbf{K}_{#1}}%
        }
        {
            \EM{\mbf{K}^{#3}_{#1}}%
        }
    }
    {
        \IfNoValueTF{#3}
        {
            \EM{\mbf{K}_{#1}[#2]}%
        }
        {
            \EM{\mbf{K}_{#1}^{#3}[#2]}%
        }
    }
}
}
\DeclareDocumentCommand{\AgeKBot}{}
{
\AgeK(-)
}
\DeclareDocumentCommand{\LangBot}{}
{
    \Lang^-%
}
\DeclareDocumentCommand{\MBot}{}
{
    \cM^-%
}
\DeclareDocumentCommand{\NBot}{}
{
    \cN^-%
}
\DeclareDocumentCommand{\DBot}{}
{
    \cD^-%
}
\DeclareDocumentCommand{\Rel}{d[]}
{
\IfNoValueTF{#1}
	{
	\EM{\mathcal{R}}
	}
	{
	\EM{\mathcal{R}_{#1}}
	}
}
\DeclareDocumentCommand{\Func}{d[]}
{
\IfNoValueTF{#1}
	{
	\EM{\mathcal{F}}
	}
	{
	\EM{\mathcal{F}_{#1}}
	}
}
\DeclareDocumentCommand{\Sort}{d[]}
{
\IfNoValueTF{#1}
	{
	\EM{\mathcal{S}}
	}
	{
	\EM{\mathcal{S}_{#1}}
	}
}
\DeclareDocumentCommand{\arity}{}{\textbf{ar}}
\DeclareDocumentCommand{\ar}{d[]}
{
\IfNoValueTF{#1}
	{
	\EM{\arity}
	}
	{
	\EM{\arity_{#1}}
	}
}
\DeclareDocumentCommand{\Fn}{d<> d[] d()}
{
\IfNoValueTF{#3}
	{
	\EM{\textrm{Fn}(#1, #2)}
	}
	{
	\EM{\textrm{Fn}(#1, #2, #3)}
	}
}
\DeclareDocumentCommand{\UnderSet}{m}
{
\EM{\mbf{set}(#1)}
}
\DeclareMathOperator{\Tree}{Tr}
\DeclareMathOperator{\Sym}{Sym}
\DeclareDocumentCommand{\Perm}{d()}
{
\EM{\Sym(#1)}
}
\DeclareDocumentCommand{\name}{m}
{
\EM{\dot{#1}}
}
\newcommand{\Fraisse}{\textrm{Fra\"iss\'e}}
\newcommand{\ThFraisse}{\text{Fr}}
\newcommand{\cofinal}{\EM{\textrm{cf}}}
\newcommand{\tc}{\EM{\textrm{tc}}}
\newcommand{\Field}{\mbb{F}}
\DeclareDocumentCommand{\tdcl}{d[] d()}
{
\IfNoValueTF{#1}
{
    \EM{\textbf{tcl}(#2)}
}
{
    \EM{\textbf{tcl}_{#1}(#2)}
}
}
\DeclareDocumentCommand{\gdcl}{d[] d()}
{
\IfNoValueTF{#1}
{
    \EM{\textrm{gcl}(#2)}
}
{
    \EM{\textrm{gcl}_{#1}(#2)}
}
}
\DeclareDocumentCommand{\qftp}{d[] d()}
{
\IfNoValueTF{#1}
{
    \EM{\quantfreetp(#2)}
}
{
    \EM{\quantfreetp_{#1}(#2)}
}
}
\DeclareMathOperator{\quantfreetp}{qtp}
\DeclareDocumentCommand{\Closure}{d<> d()}
{
\IfNoValueTF{#1}
{
    \EM{\textrm{cl}(#2)}
}
{
    \EM{\textrm{cl}_{#1}(#2)}
}
}
\DeclareDocumentCommand{\ClosureMap}{d[] d()}
{
\EM{\textrm{clMap}(#1, #2)}
}
\DeclareDocumentCommand{\CHP}{d()}
{
\IfNoValueTF{#1}
{
    \textrm{(CHP)}
}
{
    \textrm{(\EM{#1}-CHP)}
}
}
\DeclareDocumentCommand{\CJEP}{d()}
{
\IfNoValueTF{#1}
{
    \textrm{(CJEP)}
}
{
    \textrm{(\EM{#1}-CJEP)}
}
}
\newcommand{\dotminus}{\mathbin{\text{\@dotminus}}}
\newcommand{\@dotminus}{%
  \ooalign{\hidewidth\raise1ex\hbox{.}\hidewidth\cr$\m@th-$\cr}%
}
\DeclareMathOperator{\Lang}{\mathscr{L}}
\DeclareMathOperator{\id}{id}
\def\aa{{\EM{\mbf{a}}}}
\def\bb{{\EM{\mbf{b}}}}
\def\cc{{\EM{\mbf{c}}}}
\def\dd{{\EM{\mbf{d}}}}
\def\cA{{\EM{\mathcal{A}}}}
\def\cB{{\EM{\mathcal{B}}}}
\def\cC{{\EM{\mathcal{C}}}}
\def\cD{{\EM{\mathcal{D}}}}
\def\cE{{\EM{\mathcal{E}}}}
\def\cF{{\EM{\mathcal{F}}}}
\def\cG{{\EM{\mathcal{G}}}}
\def\cM{{\EM{\mathcal{M}}}}
\def\cN{{\EM{\mathcal{N}}}}
\def\cR{{\EM{\mathcal{R}}}}
\def\cS{{\EM{\mathcal{S}}}}
\def\cT{{\EM{\mathcal{T}}}}
\def\cX{{\EM{\mathcal{X}}}}
\def\w{\EM{\omega}}
\def\^{\EM{{}^{\And}}}
\def\And{\EM{\wedge}}
\def\<{\EM{\langle}}
\def\>{\EM{\rangle}}
\def\EM#1{\ensuremath{#1}}
\def\mbb#1{\EM{\mathbb{#1}}}
\def\mbf#1{\EM{\mathop{\pmb{#1}}}}
\def\mc#1{\EM{\mathcal{#1}}}
\def\st{\,:\,}
\def\:{\colon}
\providecommand{\dotdiv}{
  \mathbin{
    \vphantom{+}
    \text{
      \mathsurround=0pt 
      \ooalign{
        \noalign{\kern-.35ex}
        \hidewidth$\smash{\cdot}$\hidewidth\cr 
        \noalign{\kern.35ex}
        $-$\cr 
      }%
    }%
  }%
}
\DeclareDocumentCommand{\RightJustify}{m}{\hspace*{\fill}\mbox{#1}\penalty-9999\relax}
\newcounter{margincounter}
\DeclareDocumentCommand{\displaycounter}{}
	{{\arabic{margincounter}}}
\DeclareDocumentCommand{\incdisplaycounter}{}
	{{\stepcounter{margincounter}\arabic{margincounter}}}
\DeclareDocumentCommand{\DeclareComment}{m m m o d()}{%
%
%
%
%
%
%

\expandafter\DeclareDocumentCommand\csname Hide#1\endcsname {}
	{%
	\expandafter\DeclareDocumentCommand\csname #1\endcsname {+m} {}
	
	\expandafter\DeclareDocumentCommand\csname f#1\endcsname {+m} {}

	\expandafter\DeclareDocumentEnvironment{e#1} {} {} {}
	}

\expandafter\DeclareDocumentCommand\csname Show#1\endcsname {}
	{
	\expandafter\DeclareDocumentCommand\csname #1\endcsname {+m}
		{%
		\textcolor{#2}
			{ 
			{\tiny \bf (#3)}
			\IfValueT{#5}
				{%
				#5
				}
			####1
			}
		}

	\expandafter\DeclareDocumentCommand\csname f#1\endcsname {+m}
		{%
		\IfValueTF{#4}
			{
			\textcolor{#2}
			{\text{$\,^{(\incdisplaycounter{#4})}$}}
			\marginpar{\tiny\textcolor{#2}{
				{\text{\tiny $(\displaycounter{#4})$}}
				\text{\IfValueT{#5}{#5}
				####1}}}
			}
			{
			\textcolor{#2}
			{$\,^{(\incdisplaycounter)}$}
			\marginpar{\tiny\textcolor{#2}{
				{\tiny $(\displaycounter)$}
				\text{\IfValueT{#5}{#5}
				####1}}}
			}
		}

	\expandafter\DeclareDocumentEnvironment{e#1} {}
		{
		\textcolor{#2}
		\bgroup
		\IfValueT{#5}
			{%
			#5
			}
		}
		{
		\egroup
		}
	}

\csname Show#1\endcsname

}
\definecolor{NAColor}{rgb}{1.0,0.0,0.0}
\definecolor{ProblemColor}{rgb}{0.7,0.1,0.7}
\definecolor{TBDColor}{rgb}{0.0,0.0,0.8}
\definecolor{MathColor}{rgb}{0.0,0.4,0.1}
\definecolor{NateColor}{rgb}{0.0,0.5,1.0}
\definecolor{MostafaColor}{rgb}{1.0,0.0,1.0}
\definecolor{RefColor}{rgb}{1.0,0.0,1.0}
\definecolor{LaterColor}{rgb}{1.0,0.0,1.0}
\DeclareDocumentCommand{\DeclareCounter}{m}%
		\newcounter{#1}%
\DeclareDocumentCommand{\MyQED}{}{\qed}
\noindent\IfNoValueTF{#1}
{\emph{Proof.\!\!}}
{\emph{Proof\ #1.\ }}
\DeclareDocumentCommand{\ProofLabel}{}{%
%
\addtocounter{ProofLabelcOUntEr}{1}
\label{cUrrEntProoflAbEl\arabic{ProofLabelcOUntEr}}
}
\DeclareDocumentCommand{\ProofRef}{D<>{1}}
{%
\ref{cUrrEntProoflAbEl\arabic{ProofcOUntEr#1}}
}
\DeclareDocumentCommand{\ProofCref}{D<>{1}}
{%
\cref{cUrrEntProoflAbEl\arabic{ProofcOUntEr#1}}
}
\def\TheoremDepth{section}
\DeclareDocumentCommand{\DeclareTheorem}{m o m o}{%
%
%
%
%

\IfNoValueTF{#4}
	{%
	\IfNoValueTF{#2}
		{%
		\newtheorem{#1vArIAblE}{#3}
		}
		{%
		\newtheorem{#1vArIAblE}[#2vArIAblE]{#3}
		}
	}
	{%
	\newtheorem{#1vArIAblE}{#3}[#4]%
	}
\newtheorem*{#1vArIAblE*}{#3}

\DeclareDocumentEnvironment{#1}{o o}

	{
	\IfValueT{##2}%
		{
		\begin{spacing}{##2}
		}
	\IfValueTF{##1}
		{
		\begin{#1vArIAblE}[##1]
		}
		{
		\begin{#1vArIAblE}
		}
%
	\ProofLabel
	}
	{
	\IfValueT{##2}%
		{
		\end{spacing}{##2}
		}
	\end{#1vArIAblE}
	}

\DeclareDocumentEnvironment{#1*}{o o}

	{
	\IfValueT{##2}%
		{
		\begin{spacing}{##2}
		}
	\IfValueTF{##1}
		{
		\begin{#1vArIAblE*}[##1]
		}
		{
		\begin{#1vArIAblE*}
		}
	}
	{
	\IfValueT{##2}%
		{
		\end{spacing}{##2}
		}
	\end{#1vArIAblE*}
	}
}
\theoremstyle{plain}
\theoremstyle{definition}
\theoremstyle{remark}
\begin{document}

\setlist[itemize]{itemsep=0.75em}

\title{Cohen Generic Structures with Functions}

\thanks{The second author's research has been supported by a grant from IPM (No. 1402030417).}

\begin{abstract}
Suppose $\LangBot\subseteq \Lang$ are languages where $\Lang \setminus \LangBot$ is relational. Additionally, let $\AgeK$ be a strong \Fraisse\ class in $\Lang$. We consider the partial ordering, under substructure, of those elements in $\AgeK$ whose reduct to $\LangBot$ are substructures of a fixed $\LangBot$-structure $\MBot$. In this paper, we establish that, under general conditions, this partial order satisfies the $|\MBot|$-chain condition.

Furthermore, under these conditions, we demonstrate that any generic for such a partial order satisfies the theory of the \Fraisse\ limit of $\AgeK$, provided $\MBot$ satisfies the theory of \Fraisse\ limit of its age. 

We also provide general conditions that guarantee all such generics to be rigid, as well as conditions ensuring that these generics possess large automorphism groups.
   
\end{abstract}

\author{Nathanael Ackerman}
\address{Harvard University,
Cambridge, MA 02138, USA}
\email{nate@aleph0.net}

\author{Mohammad Golshani}
\address{School of Mathematics, Institute for Research in Fundamental Sciences (IPM), P.O. Box:
19395-5746, Tehran, Iran.}
\email{golshani.m@gmail.com}

\author{Mostafa Mirabi}
\address{Department of Mathematics and Computer Science, The Taft School, Watertown, CT 06795, USA}
\email{mmirabi@wesleyan.edu}

\subjclass[2010]{03C55, 03E35}
\keywords{\Fraisse\ limit, Cohen forcing, Generic structures}

\maketitle

\section{Introduction}

Given a strong \Fraisse\ class $\AgeK$ in a relational language $\Lang$, the concept of a Cohen generic structure for $\AgeK$ was introduced in \cite{Cohen} and \cite{Golshani}. Such a structure is obtained as a generic when $\AgeK$ is treated as a partial ordering whose order is the substructure relation. In \cite{Cohen} and \cite{Golshani}, they showed, among other things, that when $\Lang$ is a countable relational language, $\AgeK$ has only countably many elements up to isomorphism, and all elements of $\AgeK$ are finite, the partial ordering $\AgeK$ is ccc (countable chain condition) and hence preserves cardinals under forcing.

While the preservation of cardinals when forcing with strong \Fraisse\ classes in a relational language is an interesting phenomenon, the restriction to relational languages is a significant one. In particular, there are many strong \Fraisse\ classes which can be built on top of algebraic structures, and hence for which the results of \cite{Cohen} and \cite{Golshani} do not apply. 

The following are two examples to keep in mind when considering uncountable generic objects for a strong \Fraisse\ class. Both examples make fundamental use of the function symbols in the language.

\begin{example}
\label{Example: Colored vector spaces}
Suppose $\Field$ is a finite field. Let $\LangBot_{\Field} = \{+\} \cup \{f_c\}_{c \in \Field} \cup \{v_0\}$ be the one sorted language where $v_0$ is a constant, $+$ is a binary function symbol and for each $c \in \Field$, $f_c$ is a unary function symbol. Let $\AgeKBot_{\Field}$ be the age in $\LangBot_{\Field}$ whose structures are finite dimensional vector spaces over $F$ where $v_0$ is interpreted as the $0$ object, $f_c$ is interpreted as ``multiplication by $c$'' (for $c \in \Field$) and $+$ is vector addition. 

Let $\Lang_{\Field, n} = \LangBot_{\Field} \cup \{R\}$ where $R$ has arity $|\Field|^n$. We then let $\AgeK$ consist of $\Lang_{\Field, n}$ structures $\cA$ such that 
\begin{itemize}
\item $\cA \rest[\LangBot_{\Field}]\in \AgeKBot_{\Field}$, 

\item if $\cA \models R(a_0, \dots, a_{k-1})$ where $k = |\Field|^n$, then 
\begin{itemize}
\item $\{a_0, \dots, a_{k-1}\}\rest[\LangBot_{\Field}] \in \AgeKBot_{\Field}$,

\item for all $i < j < k$, $a_i \neq a_j$, 

\item if $\sigma$ is a permutation of $[k]$, then $\cA \models R(a_{\sigma(0)}, \dots, a_{\sigma(k-1)})$. 
\end{itemize}

\end{itemize}

Elements of $\AgeK$ then consist of vector spaces  with distinguished $n$-dimensional subspaces that satisfy the relation $R$. 

These can be thought of as vector space analogs of $n$-ary hypergraphs and the \Fraisse\ limit of $\AgeK$ can be thought of as the infinite dimensional vector space analog of the generic $n$-ary hypergraph. 
\end{example}

\begin{example}
\label{Example: Colored edges of trees}
Let $\LangBot_{\Tree} = \{p, r\}$ where $r$ is a constant and $p$ is a unary function symbol. Let $\AgeKBot_{\Tree}$ be the age over $\LangBot_{\Tree}$ consisting of those structures $\cA$ such that 
\begin{itemize}
\item $\cA \models p(r) = r$, 

\item $\cA \models (\forall a)\, p^{|\cA|}(a) = r$.
\end{itemize}

Elements of $\AgeKBot_{\Tree}$ can be thought of as trees with root $r$ and predecessor relation $p$. 

Let $\Lang_{\Tree} = \LangBot_{\Tree} \cup \{R\}$ where $R$ is a binary relation. Let $\AgeK_{\Tree}$ consist of those $\Lang_{\Tree}$-structures $\cA$ such that 
\begin{itemize}
\item $\cA \rest[\LangBot_{\Tree}] \in \AgeKBot_{\Tree}$, 

\item $\cA \models R(a, b) \rightarrow p(a) = b$.

\end{itemize}

Elements of $\AgeK_{\Tree}$ can be thought of as finite trees where for each node we either color the edge between the node and its predecessor (with the relation $R$) or we don't. 

A \Fraisse\ limit of $\AgeK_{\Tree}$ is then the countably branching tree where all edges of the tree are colored generically. 

When we move to case of uncountable models whose age is $\AgeKBot_{\Tree}$ though a phenomenon emerges which wasn't present in \cref{Example: Colored vector spaces}. Specifically for every uncountable cardinal $\kappa$ there are $2^\kappa$ many models of the \Fraisse\ theory of $\AgeKBot_{\Tree}$ of size $\kappa$. 

When defining generic structures we will want to fix a $\LangBot_{\Tree}$-structure $\MBot_{\Tree}$ and force with those elements of $\AgeK_{\Tree}$ whose reduct to $\LangBot_{\Tree}$ is a substructure of $\MBot_{\Tree}$. As such, the generic structure we end up with will depend on the choice of $\LangBot_{\Tree}$-structure we started with. 
\end{example}

One of the main results of \cite{Cohen} and \cite{Golshani} is that when one forces with a strong \Fraisse\ class cardinals are preserved. The restriction in \cite{Cohen} and \cite{Golshani} to relational languages is necessary for their proofs as they rely on the $\Delta$-system lemma for sets, and hence require that every subset of a structure gives rise to a substructure. 

In this paper, we will show how to weaken the assumption that the language is relational. For specificity we now describe our framework in the case where the language is countable, all elements of the age are finitely generated, and all there are (up to isomorphism) only countably many elements in the age.  Note both \cref{Example: Colored vector spaces} and \cref{Example: Colored edges of trees} satisfy these conditions.

Suppose $\LangBot \subseteq \Lang$ are languages $\Lang \setminus \LangBot$ is relational. Also suppose $\AgeK$ is a strong \Fraisse\ class in $\Lang$ where $\AgeK\rest[\LangBot]$ is a strong \Fraisse\ class in $\LangBot$. Suppose $\MBot$ is any $\LangBot$-structure which satisfies the theory of the \Fraisse\ limit of $\AgeK\rest[\LangBot]$. We consider the partial ordering whose objects are elements of $\AgeK$ whose reduct to $\LangBot$ are substructures of $\MBot$ and where the ordering is substructure. By first proving an analog of the $\Delta$-system lemma we will show that this partial ordering is ccc. In a similar approach to \cite{Cohen}, we will also consider properties of this forcing when the age consists of non-finitely generated objects. 

Both \cite{Cohen} and \cite{Golshani} also show  an interesting phenomenon occurs with many Cohen generic structures. For example, \cite{Cohen} shows that there are several natural classes where the \Fraisse\ limits have many automorphisms but the Cohen generic structures for the classes, when the underlying sets are uncountable, are always rigid. We will show that this phenomenon extends to the case when there are function symbols in $\LangBot$. Specifically, we will give conditions which will ensure that the Cohen generic structure is rigid. We will also give some conditions which ensure the Cohen generic structure has a large set of automorphisms. 

\subsection{Notation}
We let $\ORD$ be the collection of ordinals. We let $\cofinal(\kappa)$ be the \emph{cofinality} of a cardinal $\kappa$. We let $\tc(X)$ be the \emph{transitive closure} of the set $X$. If $X$ is a set we let $\Perm(X)$ be the collection of \emph{permutations} of $X$. 
A \defn{$\Delta$-system} is a collection of sets $\{S_i\}_{i \in \kappa}$  for which there is a set $S$ such that for all $i < j\in \kappa$, $S_i \cap S_j = S$.

By a language we mean a quadruple $\Lang = \{\Sort[\Lang], \Rel[\Lang], \Func[\Lang], \ar[\Lang]\}$ where $\Sort[\Lang]$ is the collection of sorts, $\Rel[\Lang]$ is the collection of relation symbols, $\Func[\Lang]$ is the collection of function symbols and $\ar[\Lang]\:\Rel[\Lang] \cup \Func[\Lang] \to \bigcup_{n \geq 1}\Sort[\Lang]^{n}$ is the function which takes a symbol and returns its arity. If $f \in \Func[\Lang]$ and $\ar[\Lang](f) = (\<S_i\>_{i \in [k]}, S_{k})$ then we say $f$ is a function symbol from $\<S_i\>_{i \in [k]}$ to $S_k$ and write $f\:\prod_{i \in [k]} S_i \to S_k$. The cardinality of a language is the cardinality of $\Sort[\Lang] \cup \Rel[\Lang] \cup \Func[\Lang]$. When $|\Sort[\Lang]| = 1$ we will omit mention of it. 

If $\cM, \cN$ are $\Lang$-structure we write $\cM \subseteq \cN$ to denote that $\cM$ is a substructure of $\cN$. We will let $\UnderSet{\cM}$ denote the underlying set of $\cM$. If $\aa$ is a tuple we write $\aa \in \cM$ if every component of $\aa$ is an element of $\cM$, i.e. $\aa \in \UnderSet{\cM}^{|\aa|}$ where $|\aa|$ is the length of the tuple $\aa$.  If $\LangBot \subseteq \Lang$ we let $\cM\rest[\LangBot]$ be the $\LangBot$-structure which is the restriction of $\cM$ to $\LangBot$. 

We define the \defn{term definable closure} of a set $A \subseteq \cM$ to be the smallest subset of $\cM$ containing $A$ and closed under taking terms. We denote it by $\tdcl[\cM](A)$. If $\aa$ is a tuple which enumerates $A$, we then let $\tdcl[\cM](\aa) = \tdcl[\cM](A)$.  We say $\cM$ is \defn{generated} by a set or tuple $X$ if $\tdcl[\cM](X) = \cM$. We say $\cM$ is \defn{$< \lambda$-generated} if there is a tuple of size $< \lambda$ which generates $\cM$. We say $\cM$ is \defn{finitely generated} if it is $< \w$-generated. 

Suppose $\cM$ is an $\Lang$-structure and $\aa \in \cM$ is a tuple. We let $\qftp[\cM](\aa)(\xx)$ be the \defn{quantifier-free type} of $\aa$ in $\cM$ with variables $\xx$, i.e. the collection of all literals in $\xx$ true about tuples of $\aa$ in $\cM$. The following folklore result is immediate but important.   

\begin{lemma}
Suppose $\cM, \cN$ are $\Lang$-structures where $\aa$ generates $\cM$ and $\bb$ generates $\cN$. If $\qftp[\cM](\aa)(\xx) = \qftp[\cN](\bb)(\xx)$ then $\cM \cong \cN$. 
\end{lemma}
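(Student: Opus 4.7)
The plan is to build the isomorphism explicitly as the term evaluation map. Since $\aa$ generates $\cM$, every element $m \in \UnderSet{\cM}$ is of the form $t^{\cM}(\aa)$ for some $\Lang$-term $t(\xx)$ with variables matching the sorts of $\aa$; similarly every $n \in \UnderSet{\cN}$ has the form $s^{\cN}(\bb)$. I would define $\phi\colon \UnderSet{\cM} \to \UnderSet{\cN}$ by $\phi(t^{\cM}(\aa)) \defas t^{\cN}(\bb)$ and then verify, in order, that $\phi$ is well-defined, injective, surjective, and preserves the interpretation of every symbol of $\Lang$.

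The central observation, used in all four verifications, is that for any two terms $t_1(\xx), t_2(\xx)$, the literal $t_1(\xx) = t_2(\xx)$ (and its negation) belongs to $\qftp[\cM](\aa)(\xx)$ iff it belongs to $\qftp[\cN](\bb)(\xx)$, by the hypothesis that these quantifier-free types agree. This gives well-definedness (if $t_1^{\cM}(\aa) = t_2^{\cM}(\aa)$ then $t_1^{\cN}(\bb) = t_2^{\cN}(\bb)$) and, read in the opposite direction, injectivity. Surjectivity is immediate from $\tdcl[\cN](\bb) = \cN$. For preservation of a function symbol $f$, one checks that $f^{\cM}(t_1^{\cM}(\aa), \dots, t_k^{\cM}(\aa)) = (f(t_1, \dots, t_k))^{\cM}(\aa)$, and the analogous identity holds in $\cN$, so $\phi$ commutes with $f$ by construction; for a relation symbol $R$, the atomic formula $R(t_1(\xx), \dots, t_k(\xx))$ sits in $\qftp[\cM](\aa)$ iff it sits in $\qftp[\cN](\bb)$, and the same holds for its negation, so $R^{\cM}$ transfers to $R^{\cN}$ under $\phi$.

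There is no real obstacle here; the only thing to be slightly careful about is bookkeeping in the multi-sorted setting, where one must insist that the variable $\xx$ is sorted compatibly with $\aa$ (and hence with $\bb$, since the type determines the sorts of its variables), and that terms $t(\xx)$ range over all sorts so as to cover every element of $\cM$. Once that is in place, the verification is a direct unwinding of the definitions of term interpretation and quantifier-free type, and one concludes that $\phi$ is an $\Lang$-isomorphism $\cM \cong \cN$.
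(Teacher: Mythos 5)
Your proof is correct and is exactly the standard argument the paper has in mind: the paper states this lemma as an ``immediate'' folklore result and omits the proof entirely, and the term-evaluation map $t^{\cM}(\aa) \mapsto t^{\cN}(\bb)$, with well-definedness, injectivity, and preservation of symbols all read off from the agreement of the quantifier-free types, is the intended verification. Your remark about sorting the variables compatibly in the multi-sorted setting is a sensible point of care but poses no real difficulty.
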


We say a subset $T \subseteq \Liw(\Lang)$ is \defn{complete} if for every $\varphi \in \Liw(\Lang)$, either $T \vdash \varphi$ or $T \vdash \neg \varphi$ where $\vdash$ is the standard provability relation for $\Liw(\Lang)$ (see \cite{Barwise}). Two $\Lang$-structures $\cM$ and $\cN$ are \defn{potentially isomorphic} if they satisfy the same sentences of $\Liw(\Lang)$.
It can be shown that $\cM$ and $\cN$ are potentially isomorphic if and only if they are isomorphic in some forcing extension of the universe (see \cite{Marker2016-MARLOI}).  


  
We will use $V$ to denote a model of Zermelo-Fraenkel set theory with the axiom of choice. For definitions with respect to forcing we refer the reader to \cite{MR756630}. In particular, we will use the convention of \cite{MR756630} that if $(P, \leq)$ is a partial ordering  and $p \leq q$ then $p$ is \emph{stronger} than $q$.  
\section{\Fraisse\ classes}

We now review basic notions of \Fraisse\ theory. See, for example, \cite[Section 7.1]{MR1221741} for more on countable \Fraisse\ theory.

\begin{definition}
An \defn{age} in a language $\Lang$ is a collection of $\Lang$-structures closed under isomorphism. If $\AgeK$ is an age in $\Lang$ then we define the following.  
\begin{itemize}
\item For a cardinal $\kappa$ we let $\AgeK<\kappa>$ be the collection of elements of $\AgeK$ which are generated by a set of size $< \kappa$. 

\item For $\LangBot \subseteq \Lang$ let $\AgeK\rest[\LangBot] = \{\cA \rest[\LangBot] \st \cA \in \AgeK\}$. 

\end{itemize} 

We say $\AgeK$ is \defn{$\kappa$-bounded} if $\AgeK = \AgeK_\kappa$. 

If $\cM$ is a $\Lang$-structure we define the \defn{$\kappa$-generated age} of $\cM$ to be the collection of $\Lang$-structures isomorphic to a substructure of $\cM$ generated by a set of size $<\kappa$. 
\end{definition} 

\begin{definition}
Suppose $\AgeK$ is an age. 
\begin{itemize}
\item We say $\AgeK$ has the \defn{hereditary property (HP)} if whenever $\cA_1 \in \AgeK$ and $\cA_0 \subseteq \cA_1$, then $\cA_0 \in \AgeK$. 

\item We say $\AgeK$ has the \defn{joint embedding property (JEP)} if whenever $\cA_0, \cA_1 \in \AgeK$ there is an $\cA_2\in \AgeK$ and embeddings $i\:\cA_0 \to \cA_2$ and $j\:\cA_1\to \cA_2$. 

\item Suppose $i_1\:\cA_0 \to \cA_1$, $i_2\:\cA_0 \to \cA_2$ are embeddings with $\cA_0, \cA_1, \cA_2 \in \AgeK$. An \defn{amalgam} of $(i_1, i_2)$ is a pair of embeddings $(j_1, j_2)$ such that, for some $\cA_3 \in \AgeK$, $j_1\:\cA_1\to \cA_3$,  $j_2\:\cA_2 \to \cA_3$ and $j_1 \circ i_1 = j_2 \circ i_2$. If further $j_1``[\cA_1] \cap j_2``[\cA_2] = (j_1 \circ i_1)``[\cA_0]$, then we say $(j_1, j_2)$ is a \defn{disjoint amalgam} of $(i_1, i_2)$.

\item We say $\AgeK$ has the \defn{amalgamation property (AP)} if whenever $\cA_0, \cA_1, \cA_2 \in \AgeK$ and $i_1\:\cA_0 \to \cA_1$, $i_2\:\cA_0 \to \cA_2$ are embeddings, then $(i_1, i_2)$ has an amalgam.

\item We say $\AgeK$ has the \defn{strong amalgamation property (SAP)} if each such $(i_1, i_2)$ has a disjoint amalgam.

\item We say $\AgeK$ is \defn{closed under less than $\kappa$-unions (C$\kappa$U)} if for all $\gamma < \kappa$ and $(\cA_i)_{i \in \gamma} \subseteq \AgeK$ with $\cA_i \subseteq \cA_j$ for $i < j \in \gamma$ we have $\bigcup_{i \in \gamma} \cA_i \in \AgeK$. 
\end{itemize}

A \defn{\Fraisse\ class} is an age which satisfies (HP), (JEP) and (AP). A \defn{strong \Fraisse\ class} is a \Fraisse\ class which satisfies (SAP). 
\end{definition}

\begin{definition}
\label{Definition Fraisse limit}
Suppose $\AgeK$ is a \Fraisse\ class in $\Lang$. A \defn{\Fraisse\ limit} of $\AgeK$ is an $\Lang$ structure $\cM$ such that 
\begin{itemize}
\item[(a)] For every $\cB \in \AgeK$ there is an embedding from $\cB$ into $\cM$, 

\item[(b)] If $\cB \in \AgeK$ and $i, j\:\cB \to \cM$ are embeddings, then there is a $g \in \Aut(\cM)$ such that $i = g \circ j$. 

\end{itemize}
\end{definition}

The following definition will be useful. 
\begin{definition}
Suppose $\cM$ is an $\Lang$-structure and $A \subseteq \cM$. We define the \defn{group theoretic definable closure} of $A$ to be 
\[
\gdcl[\cM](A) = \{b \in \cM \st (\forall g \in \Aut(\cM))\, g\rest[A] = \id \rightarrow g(b) = b\}.
\]
We say $\cM$ has \defn{trivial group theoretic definable closure} if for all finitely generated substructures $\cA\subseteq \cM$,\, $\gdcl[\cM](\UnderSet{\cA}) = \UnderSet{\cA}$. 
\end{definition}

Note the following is a well known result (see \cite[Ch. 7.1]{MR1221741})
\begin{lemma}
\label{Fraisse limits exist and trivial group theoretic dcl}   
Suppose $\Lang$ is a countable language, $\AgeK$ is an $\w$-bounded \Fraisse\ class in $\Lang$ and $\AgeK$ has only countably many elements up to isomorphism. Then there is a countable \Fraisse\ limit $\cM$ of $\AgeK$ which is unique up to isomorphism. 

Furthermore, if $\Lang$ is relational, then $\AgeK$ is a strong \Fraisse\ class if and only if $\cM$ has trivial group theoretic definable closure. 
\end{lemma}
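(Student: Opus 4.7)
The plan is to prove the two parts separately by standard Fra\"iss\'e-theoretic constructions. For \textbf{existence}, I would build $\cM$ as the union of a chain $\cM_0 \subseteq \cM_1 \subseteq \cdots$ of elements of $\AgeK$ via a bookkeeping argument. Since $\Lang$ is countable, $\AgeK$ is $\w$-bounded, and $\AgeK$ has only countably many isomorphism types, there are only countably many ``extension tasks'' of the form $(\cA, \cB, i)$ with $\cA \subseteq \cB$ in $\AgeK$. At stage $n$, I would interleave two kinds of moves: (i) use \JEP\ to embed the next $\cB \in \AgeK$ into some $\cM_{n+1}$, and (ii) for each already-realized embedding $i\:\cA \to \cM_n$ and each designated extension $\cA \subseteq \cB$, use \AP\ to produce $\cM_{n+1} \supseteq \cM_n$ into which $\cB$ embeds extending $i$. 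Taking $\cM = \bigcup_n \cM_n$ gives a countable structure; condition (a) of \cref{Definition Fraisse limit} follows since every $\cB \in \AgeK$ is eventually enumerated, and condition (b) follows by a standard back-and-forth using the amalgamation moves above. \textbf{Uniqueness} of $\cM$ follows from a back-and-forth argument between any two countable Fra\"iss\'e limits, leveraging both (a) and (b) and the countability of each side.

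For the ``furthermore'' clause, note that when $\Lang$ is relational every finite subset of $\cM$ already underlies a substructure in $\AgeK$, so ``finitely generated'' coincides with ``finite.'' For the direction \SAP\ $\Rightarrow$ trivial gdcl, let $\cA \subseteq \cM$ be finitely generated and let $b \in \UnderSet{\cM}\setminus \UnderSet{\cA}$. Let $\cB$ be the substructure of $\cM$ generated by $\UnderSet{\cA} \cup \{b\}$, and apply \SAP\ to the pair of inclusions $\cA \hookrightarrow \cB$ to obtain $\cC \in \AgeK$ with embeddings $j_1, j_2\: \cB \to \cC$ agreeing on $\cA$ but with $j_1(b) \neq j_2(b)$. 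Using condition (a) embed $\cC$ into $\cM$ so that $j_1$ is the inclusion; then $j_2(b) \neq b$ and $\qftp[\cM](b/\UnderSet{\cA}) = \qftp[\cM](j_2(b)/\UnderSet{\cA})$, so by (b) applied to the substructure generated by $\UnderSet{\cA} \cup \{b, j_2(b)\}$ there is $g \in \Aut(\cM)$ fixing $\cA$ pointwise with $g(b) = j_2(b) \neq b$, witnessing $b \notin \gdcl[\cM](\UnderSet{\cA})$.

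For the converse, assume trivial gdcl and let $i_1\:\cA_0 \to \cA_1$, $i_2\:\cA_0 \to \cA_2$ be embeddings in $\AgeK$. Using (a) and (b), embed $\cA_1$ and $\cA_2$ into $\cM$ so that the two copies of $\cA_0$ coincide; call their images $\cA_1', \cA_2'$ with common image $\cA_0' = i_1[\cA_0]$. If the intersection $\cA_1' \cap \cA_2'$ strictly contains $\cA_0'$, pick $b \in (\cA_1'\cap \cA_2')\setminus \cA_0'$; by trivial gdcl (applied to the finite substructure whose underlying set is $\cA_0' \cup \cA_1'\setminus\{b\}$, or more carefully to a copy of $\cA_1'$ omitting $b$), obtain $g \in \Aut(\cM)$ fixing $\cA_0'$ pointwise that removes $b$ from the intersection; replace $\cA_2'$ by $g[\cA_2']$ and iterate. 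Since everything is finite, finitely many steps produce a disjoint amalgam of the two embeddings inside $\cM$, and restricting to the $\AgeK$-substructure generated by its union yields the required disjoint amalgam. The main obstacle in writing this out carefully is the iteration step: one must ensure that successive automorphisms, while removing an offending element from the intersection, do not reintroduce previously removed elements, which is handled by applying gdcl to the structure generated by $\cA_0'$ together with the elements one wants to keep fixed.
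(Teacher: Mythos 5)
The paper does not actually prove \cref{Fraisse limits exist and trivial group theoretic dcl}; it is stated as a well-known result with a pointer to \cite[Ch.~7.1]{MR1221741}, so there is no in-paper argument to match yours against. Your proof is the standard one and is essentially correct: the bookkeeping construction and back-and-forth for existence and uniqueness are routine, and the direction \SAP\ $\Rightarrow$ trivial gdcl is exactly right (apply \SAP\ to the two copies of the inclusion $\cA \hookrightarrow \cB$, realize the disjoint amalgam inside $\cM$ with one arm the inclusion, and use homogeneity). The one genuinely delicate point is the iteration in the converse, and you correctly flagged it; to make it airtight, at each step with $b \in (\cA_1' \cap \cA_2') \setminus \cA_0'$ apply trivial gdcl to the finite substructure on $(\UnderSet{\cA_1'} \cup \UnderSet{\cA_2'}) \setminus \{b\}$ to get $g \in \Aut(\cM)$ fixing that whole set pointwise with $g(b) \neq b$: injectivity of $g$ then forces $g(b)$ outside $(\UnderSet{\cA_1'} \cup \UnderSet{\cA_2'}) \setminus \{b\}$, so $g[\cA_2']$ meets $\cA_1'$ in strictly fewer points and no previously separated point is reintroduced, and the process terminates. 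This is precisely the fix you gestured at, and it works. The textbook route instead observes that trivial gdcl over all finite sets forces every orbit over a finite set to be infinite (a finite orbit of size at least $2$ yields a gdcl violation once all but one of its points are adjoined to the base) and then invokes B.~H.~Neumann's separation lemma to move $\cA_2'$ off $\cA_1'$ in a single step; your greedy version trades that lemma for the bookkeeping above. Two small points worth making explicit in a write-up: the argument uses that every finite subset of $\cM$ carries a substructure belonging to $\AgeK$ (true for the constructed limit, being a union of a chain from $\AgeK$ together with \HP), and in the \SAP\ direction it is cleaner to apply \cref{Definition Fraisse limit}(b) directly to the two embeddings of $\cB$ into $\cM$ (the inclusion and $e \circ j_2$) rather than to the substructure generated by $\UnderSet{\cA} \cup \{b, j_2(b)\}$.
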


\begin{definition}
\label{Definition Fraisse Theory}
Suppose $\AgeK$ is a \Fraisse\ class which is $\kappa$-bounded. We let $\ThFraisse_\kappa(\AgeK) \subseteq \Lik(\Lang)$ be the theory with the following axioms. 
\begin{itemize}
\item[(a)] for all $\cA \in \AgeK$ and $\aa \in \cA$ with $\aa$ generating $\cA$
\[
(\exists \xx)\, \bigwedge\qftp[\cA](\aa)(\xx). 
\]

\item[(b)] for all $\cA, \cB \in \AgeK$ with $\cA\subseteq \cB$, $\aa$ generating $\cM$ and $\bb$ generating $\cB$
\[
(\forall \xx)\, \bigg[ \bigwedge\qftp[\cA](\aa)(\xx) \rightarrow (\exists \yy)\, \bigwedge\qftp[\cB](\aa\bb)(\xx\yy)\bigg]. 
\]
\end{itemize}
We call $\ThFraisse_\kappa(\AgeK)$ the \defn{$\kappa$-\Fraisse\ theory} of $\AgeK$. 
\end{definition}

Note that if $i\:\cA_0 \cong \cA_1$ and $\aa \in \cA_0$, then $\qftp[\cA_0](\aa)(\xx) = \qftp[\cA_1](i(\aa))(\xx)$. Therefore, as every $\kappa$-bounded \Fraisse\ class has only a set size collection of objects up to isomorphism, $\bigwedge \ThFraisse_\kappa(\AgeK_\kappa) \in \Lik(\Lang)$ for all $\kappa$.

The following is folklore. 
\begin{lemma}
\label{Fraisse limits satisfy Fraisse theories}
If $\AgeK$ is $\kappa$-bounded and $\cM$ is a \Fraisse\ limit of $\AgeK$, then $\cM \models \ThFraisse_\kappa(\AgeK)$. 
\end{lemma}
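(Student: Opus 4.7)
The plan is to verify the two axiom schemata of $\ThFraisse_\kappa(\AgeK)$ directly from the definition of a \Fraisse\ limit (\cref{Definition Fraisse limit}), using the folklore lemma that quantifier-free types of generating tuples determine the isomorphism type of the structure.

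For axiom scheme (a), let $\cA \in \AgeK$ be generated by $\aa$. By property (a) of \cref{Definition Fraisse limit}, there is an embedding $e \colon \cA \to \cM$. Then the tuple $e(\aa) \in \cM$ realizes $\qftp[\cA](\aa)(\xx)$, since embeddings preserve literals.

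For axiom scheme (b), fix $\cA \subseteq \cB$ in $\AgeK$ with $\aa$ generating $\cA$ and $\bb$ generating $\cB$, and suppose $\cc \in \cM$ realizes $\qftp[\cA](\aa)(\xx)$. First I would show that $\cc$ gives rise to an embedding of $\cA$ into $\cM$: the substructure $\cA' := \tdcl[\cM](\cc) \subseteq \cM$ is generated by $\cc$ and has the same quantifier-free type on its generators as $\cA$ does on $\aa$, so by the lemma following the definition of $\qftp$ the assignment $\aa \mapsto \cc$ extends to an isomorphism $\cA \to \cA'$, yielding an embedding $e_1 \colon \cA \to \cM$. Separately, property (a) of the \Fraisse\ limit gives an embedding $j \colon \cB \to \cM$, and restricting to $\cA$ gives a second embedding $e_2 := j\rest[\UnderSet{\cA}] \colon \cA \to \cM$.

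Now I would apply property (b) of the \Fraisse\ limit to the two embeddings $e_1, e_2$ of $\cA$ into $\cM$: there is some $g \in \Aut(\cM)$ with $e_1 = g \circ e_2$. Then $g \circ j \colon \cB \to \cM$ is an embedding satisfying $(g \circ j)(\aa) = g(e_2(\aa)) = e_1(\aa) = \cc$. Setting $\dd := (g \circ j)(\bb)$, the tuple $\cc\dd$ realizes $\qftp[\cB](\aa\bb)(\xx\yy)$ in $\cM$, as required. The only moderately subtle point is the identification of $\cA$ with the substructure $\tdcl[\cM](\cc)$ to produce the embedding $e_1$ from the realization $\cc$; everything else is a direct application of homogeneity of the \Fraisse\ limit.
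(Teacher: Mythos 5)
Your proof is correct and follows essentially the same route as the paper's: axiom scheme (a) from clause (a) of \cref{Definition Fraisse limit}, and axiom scheme (b) by embedding $\cB$ into $\cM$, recognizing the realization $\cc$ as a second copy of $\cA$ via the quantifier-free-type lemma, and using homogeneity to move $i(\aa)$ onto $\cc$. Your explicit construction of the embedding $e_1\colon \cA \to \tdcl[\cM](\cc)$ just makes precise a step the paper states more tersely.
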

\begin{proof}
That $\cM$ satisfies the axioms in \cref{Definition Fraisse Theory} (a) follows from \cref{Definition Fraisse limit} (a). Now suppose $\cA, \cB, \aa, \bb$ are as in \cref{Definition Fraisse Theory} (b) and suppose $\cM \models \bigwedge\qftp[\cA](\aa)(\cc)$ for some tuple $\cc \in \cM$. There is then an embedding $i\:\cB \to \cM$ by \cref{Definition Fraisse limit} (a). But there is also an embedding $j\:\cc \to i(\aa)$ as $\aa$ has the same quantifier-free type in $\cA$ as $\cc$ does in $\cM$. Therefore, by  \cref{Definition Fraisse limit} (b) there is an automorphism $g$ of $\cM$ taking $i(\aa)$ to $\cc$. Let $\dd = g(i(\bb))$. But then $\aa\bb$ has the same quantifier-free type in $\cB$ as $i(\aa\bb)$ does in $\cM$, which in turn has the same quantifier-free type in $\cM$ as $g(i(\aa\bb)) = \cc \dd$. Therefore $\cM$ satisfies the axioms in \cref{Definition Fraisse Theory} (b).
\end{proof}

The following is folklore.
\begin{proposition}
Suppose $\AgeK$ is $\w$-bounded. Then $\ThFraisse_\w(\AgeK)$ is complete for $\Liw(\Lang)$. 
\end{proposition}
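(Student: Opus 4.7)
The plan is to prove completeness by a back-and-forth argument: establish that any two models $\cM, \cN$ of $\ThFraisse_\w(\AgeK)$ admit a back-and-forth system of partial isomorphisms, so by Karp's theorem they are $\Liw$-equivalent, which yields completeness of the theory in $\Liw$.

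The candidate back-and-forth system $I$ would consist of pairs of finite tuples $(\aa, \bb)$ with $\aa \in \cM$ and $\bb \in \cN$ such that there exists some $\cA \in \AgeK$ with generators $\cc$ satisfying $\qftp[\cM](\aa)(\xx) = \qftp[\cN](\bb)(\xx) = \qftp[\cA](\cc)(\xx)$. The joint embedding property of $\AgeK$ combined with axiom (a) of \cref{Definition Fraisse Theory} applied to a minimal element of $\AgeK$ ensures $I$ is nonempty. For the forth direction: given $(\aa, \bb) \in I$ witnessed by $\cA \in \AgeK$ with generators $\cc$, and given $m \in \cM$, I would first identify the finitely generated substructure $\tdcl[\cM](\aa, m)$ with some $\cB \in \AgeK$ extending $\cA$ under the correspondence $\aa \leftrightarrow \cc$, then apply axiom (b) in $\cN$ to the pair $\cA \subseteq \cB$ (the tuple $\bb$ already realizes the qftp of $\cc$ in $\cA$) to produce $n \in \cN$ with $(\aa m, \bb n) \in I$. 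The back direction is symmetric. Appealing to the isomorphism lemma (that quantifier-free types of generators determine a structure up to isomorphism) tightens the bookkeeping.

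The principal obstacle is the identification step: establishing that every finitely generated substructure of any model of $\ThFraisse_\w(\AgeK)$ is isomorphic to some element of $\AgeK$, equivalently, that the age of the model is contained in $\AgeK$. Here the $\w$-boundedness of $\AgeK$ is essential, since all members of $\AgeK$ are finitely generated and thus their quantifier-free types are directly comparable with those of finite tuples in $\cM$. The plan for this step is an inductive argument on tuple length, combining axioms (a), (b), and the hereditary property (HP): one shows that any qftp realized by a finite tuple in $\cM$ must match the qftp of some generator tuple of an element of $\AgeK$, since otherwise the extension axioms together with (HP) produce a contradictory chain of extensions. Once age containment is secured, the system $I$ has the required back-and-forth property, Karp's theorem yields the $\Liw$-equivalence of $\cM$ and $\cN$, and the completeness of $\ThFraisse_\w(\AgeK)$ in $\Liw$ follows.
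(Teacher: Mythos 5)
Your architecture is genuinely different from the paper's. The paper reduces completeness of the set-sized theory to $\bigwedge\ThFraisse_\w(\AgeK)$ being a Scott sentence in a forcing extension where the theory is countable, produces the countable \Fraisse\ limit as a model, and runs a back-and-forth between countable models; you instead build a back-and-forth system of partial isomorphisms directly between arbitrary models and invoke Karp's theorem. Your route never leaves $V$ and avoids the countable \Fraisse\ limit entirely, but it only delivers that all models agree on all sentences of $\Liw(\Lang)$; since the paper defines completeness via the provability relation $\vdash$, you still owe a word on why semantic agreement yields $T\vdash\varphi$ or $T\vdash\neg\varphi$ --- the paper's detour through Scott sentences is precisely how it discharges that point.

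The genuine gap is the step you flag as the principal obstacle, and the repair you sketch cannot work. Axioms (a) and (b) of \cref{Definition Fraisse Theory} are purely positive: (a) asserts that certain quantifier-free types are realized, and (b) is a conditional whose hypothesis is $\bigwedge\qftp[\cA](\aa)(\xx)$ for some $\cA\in\AgeK$. Neither schema constrains a tuple of $\cM$ whose quantifier-free type fails to match that of a generating tuple of any member of $\AgeK$; for such a tuple every instance of (b) holds vacuously, so there is no ``contradictory chain of extensions'' to extract, and no induction on tuple length will derive age containment from (a), (b) and (HP). Concretely, take $\Lang=\{E\}$ and $\AgeK$ the finite (irreflexive, symmetric, triangle-free) graphs: the disjoint union of the Henson graph with a single point $v$ satisfying $E(v,v)$ satisfies every instance of (a) and (b), because no tuple containing $v$ realizes any $\qftp[\cA](\aa)(\xx)$ with $\cA\in\AgeK$, yet the substructure generated by $v$ is not in $\AgeK$. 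So age containment has to be imposed --- either as a standing hypothesis on the models considered or as an additional universal axiom scheme asserting that every finite tuple realizes some $\bigwedge\qftp[\cA](\aa)(\xx)$ with $\cA\in\AgeK$ --- rather than derived. (To be fair, the paper's own ``straightforward back and forth'' between countable models silently relies on the same fact in order to apply axiom (b) at each step.) Granting age containment, the rest of your forth step --- identifying $\tdcl[\cM](\aa m)$ with some $\cB\in\AgeK$ extending $\cA$ and applying (b) in $\cN$ --- is correct, as is the appeal to Karp's theorem.
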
 
\begin{proof}
As $\ThFraisse_\w(\AgeK)$ is a set, it is complete for $\Liw(\Lang)$ if and only if $\bigwedge \ThFraisse_\w(\AgeK)$ is a Scott sentence in any model of set theory where it is countable. We can therefore assume without loss of generality that $\ThFraisse_\w(\AgeK)$ is countable.

In particular, we have $\AgeK$ has only countably many elements up to isomorphism. Therefore by \cref{Fraisse limits exist and trivial group theoretic dcl} $\AgeK$ has a countable \Fraisse\ limit $\cM$. But, by \cref{Fraisse limits satisfy Fraisse theories} we have $\cM \models \ThFraisse_\w(\AgeK)$.

Finally, a straightforward back and forth argument shows that any two countable models of $\ThFraisse_\w(\AgeK)$ are isomorphic. Therefore $\bigwedge \ThFraisse_\w(\AgeK)$ is a Scott sentence of $\cM$ and hence complete.
\end{proof}

In our context, we want to consider a pair of languages $\LangBot \subseteq \Lang$ where $\Lang\setminus \LangBot$ is relational. We then consider a strong \Fraisse\ class $\AgeK$ in $\Lang$ such that $\AgeK\rest[\LangBot]$ is a strong \Fraisse\ class in $\LangBot$. We then fix some large $\LangBot$-structure $\MBot$ and we will want to force with elements of $\AgeK$ whose restriction to $\LangBot$ is a substructure of $\MBot$. The following notation will be convenient. 

\begin{definition}
Suppose $\LangBot \subseteq \Lang$ are languages, $\MBot$ is a $\LangBot$-structure and $\AgeK$ is an age over $\Lang$. We let 
\[
\AgeK[\MBot] = \{\cA \in \AgeK \st \cA\rest[\LangBot] \subseteq \MBot\}. 
\]
\end{definition}

There is though one issue with the above framework. Even though we have strong amalgamation in both $\AgeK$ and $\AgeK\rest[\LangBot]$ it is possible that given $\Lang$-structures $\cA \subseteq \cB$ and $\cA \subseteq \cC$ in $\AgeK[\MBot]$ the amalgamation of $\cB\rest[\LangBot]$ and $\cC\rest[\LangBot]$ over $\cA\rest[\LangBot]$, which is imposed by $\MBot$, is not compatible with any amalgamation of $\cB$ and $\cC$ over $\cA$. We therefore will need a stronger version of amalgamation which ensures every disjoint amalgam in $\AgeK\rest[\LangBot]$ can be expanded to a disjoint amalgam in $\AgeK$. We now make this precise. 

\begin{definition}
Suppose 
\begin{itemize}
\item $\LangBot \subseteq \Lang$, 

\item $\AgeK$ is a strong \Fraisse\ class in $\Lang$ with $\AgeK\rest[\LangBot]$ a strong \Fraisse\ class in $\LangBot$. 
\end{itemize}
We say $\AgeK$ has the  \defn{extended strong amalgamation property over $\LangBot$} if, whenever
\begin{itemize}
\item $\cA, \cB, \cC \in \AgeK$ with $\cA \subseteq \cB$ and $\cA \subseteq \cC$ and $\UnderSet{\cB} \cap \UnderSet{\cC} = \UnderSet{\cA}$ 

\item $\DBot \in \AgeK\rest[\LangBot]$ with $\cB\rest[\LangBot] \subseteq \DBot$, $\cC\rest[\LangBot] \subseteq \DBot$ and where $\DBot$ is generated by $\UnderSet{\cB} \cup \UnderSet{\cC}$, 
\end{itemize}
Then there is a $\cD \in \AgeK$ such that $\cB \subseteq \cD$, $\cC \subseteq \cD$ and $\cD \rest[\LangBot] = \DBot$. 

\end{definition}

\section{Forcings}

We now introduce the forcing notions we are interested in. 

\begin{definition}
Suppose 
\begin{itemize}
\item $\LangBot \subseteq \Lang$, 

\item $\Lang \setminus \LangBot$ is relational, 

\item both $\AgeK$ and $\AgeK\rest[\LangBot]$ are strong \Fraisse\ classes, 

\item $\MBot \models \ThFraisse_\kappa(\AgeK\rest[\LangBot])$, 
\end{itemize}
We then let $\Fn<\MBot>[\AgeK](\kappa)$ be the partial ordering where 
\begin{itemize}
\item the elements are structures in $\AgeK<\kappa>[\MBot]$

\item for $\cM, \cN \in \Fn<\MBot>[\AgeK](\kappa)$, $\cM \leq \cN$ if $\cN \subseteq \cM$.
\end{itemize}

\end{definition}

Note for $\cM, \cN \in \Fn<\MBot>[\AgeK](\kappa)$ we have $\cM$ is stronger than $\cN$ precisely when $\cN$ is a substructure of $\cM$, i.e. $\cN \subseteq \cM$. 

\begin{proposition}
\label{propclosure}
Suppose $\lambda \leq \cofinal(\kappa)$ and $\AgeK$ is a strong \Fraisse\ class with (C$\lambda$U). Then $\Fn<\MBot>[\AgeK](\kappa)$ is $\lambda$-closed. 
\end{proposition}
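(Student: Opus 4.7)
The plan is to show that every descending chain of length $<\lambda$ in $\Fn<\MBot>[\AgeK](\kappa)$ has a lower bound, by taking the obvious candidate: the union. So fix $\gamma < \lambda$ and a descending sequence $\<\cM_i\>_{i \in \gamma}$ in $\Fn<\MBot>[\AgeK](\kappa)$. By definition of the ordering, this means $\cM_i \subseteq \cM_j$ for $i < j$ in $\gamma$. I would set $\cM \defas \bigcup_{i \in \gamma} \cM_i$ and verify that $\cM \in \Fn<\MBot>[\AgeK](\kappa)$; once this is done, $\cM_i \subseteq \cM$ for every $i$, so $\cM \leq \cM_i$, giving the required lower bound.

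There are three things to check. First, $\cM \in \AgeK$: since $\<\cM_i\>_{i \in \gamma}$ is a $\subseteq$-increasing chain of length $\gamma < \lambda$ of elements of $\AgeK$, this is exactly the content of the hypothesis (C$\lambda$U). Second, that $\cM\rest[\LangBot] \subseteq \MBot$: this is immediate from $\cM_i\rest[\LangBot] \subseteq \MBot$ for each $i$, because substructure is preserved under increasing unions. Third, and the only point requiring care, that $\cM \in \AgeK<\kappa>[\MBot]$, i.e., that $\cM$ is generated by a set of size $<\kappa$. For each $i < \gamma$ pick $S_i \subseteq \UnderSet{\cM_i}$ of cardinality $<\kappa$ with $\tdcl[\cM_i](S_i) = \cM_i$, and set $S \defas \bigcup_{i \in \gamma} S_i$. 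Then $\tdcl[\cM](S) \supseteq \bigcup_i \tdcl[\cM_i](S_i) = \bigcup_i \cM_i = \cM$, so $S$ generates $\cM$.

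The main (mild) obstacle is the cardinal arithmetic showing $|S| < \kappa$. Since $\gamma < \lambda \leq \cofinal(\kappa)$, the sequence $\<|S_i|\>_{i \in \gamma}$ is a sequence of length $<\cofinal(\kappa)$ of cardinals below $\kappa$; by definition of cofinality this sequence is bounded in $\kappa$, so $\mu \defas \sup_{i \in \gamma}|S_i| < \kappa$. Then $|S| \leq |\gamma| \cdot \mu$, and since both factors are $<\kappa$, standard cardinal arithmetic gives $|\gamma|\cdot \mu < \kappa$ (taking the max if either is infinite and noting $\kappa$ must be infinite to be a nontrivial cofinality for this argument). Hence $|S| < \kappa$, so $\cM \in \AgeK<\kappa>[\MBot]$ as required, completing the proof.
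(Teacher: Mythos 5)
Your proof is correct and follows essentially the same route as the paper's: take the union of the chain, invoke (C$\lambda$U) to see it lies in $\AgeK$, and use $\gamma < \lambda \leq \cofinal(\kappa)$ to see it remains $<\kappa$-generated. You simply spell out the cardinal arithmetic and the check that the union's reduct sits inside $\MBot$, which the paper leaves implicit.
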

\begin{proof}
Suppose $\<\cA_i\>_{i \in \gamma}$ is a decreasing sequence in $\Fn<\MBot>[\AgeK](\kappa)$ with $\gamma < \lambda$, i.e. for all $i \in \gamma$, $\cA_i$ is $<\kappa$-generated and for $i < j < \gamma$, $\cA_i \subseteq \cA_j$. Then, because $\AgeK$ has (C$\lambda$U), we know that $\bigcup_{i \in \gamma} \cA_i \in \AgeK$. Therefore, as $\gamma < \lambda \leq \cofinal(\kappa)$ we $\bigcup_{i \in \gamma} \cA_i \in \AgeK$ is $<\kappa$-generated. But then for all $j < \gamma$, $\cA_j \subseteq \bigcup_{i \in \gamma} \cA_i$. Therefore, as $\<\cA_i\>_{i \in \gamma}$ was arbitrary, $\Fn<\MBot>[\AgeK](\kappa)$ is $\lambda$-closed. 
\end{proof}

Our main result will give conditions when $\Fn<\MBot>[\AgeK](\kappa)$ isn't just $\kappa$-closed but has the $\kappa^+$-chain condition. First though we will need to prove an analog of the $\Delta$-systems lemma  which will play a crucial role in our result. For a proof of the $\Delta$-system lemma see, for example, \cite[Theorem II.1.5]{MR756630}.

\begin{lemma}[Term $\Delta$-Systems Lemma]
\label{Delta-Systems Lemma}
Suppose 
\begin{itemize}
\item $\kappa^{<\lambda} = \kappa$

\item $|\Lang| \leq \kappa$, 

\item $\cM$ is an $\Lang$-structure, 
 
\item $B$ is a collection of distinct $<\lambda$-generated substructures of $\cM$ with $|B| = \kappa^+$. 

\end{itemize}

Then there is a $\cN \subseteq \cM$ and a collection $(\cN_i)_{i \in \kappa^+}$ with $\cN_i \in B$ for all $i \in\kappa^+$ and such that $(\forall i < j \in \kappa^+)\, \cN_i \cap \cN_j = \cN$ 

\end{lemma}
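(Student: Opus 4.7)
The plan is a two-stage refinement of $B$: a routine application of the classical $\Delta$-system lemma to generating sets, followed by a subtler argument promoting the $\Delta$-system property from generating tuples to their full term closures.

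Enumerate $B = (\cN_i)_{i < \kappa^+}$ and, for each $i$, fix a generating set $A_i \subseteq \cN_i$ with $|A_i| < \lambda$. The hypothesis $\kappa^{<\lambda} = \kappa$ is exactly what is needed to apply the classical $\Delta$-system lemma (\cite[Theorem II.1.5]{MR756630}) to the family $(A_i)_{i < \kappa^+}$ of $<\lambda$-sized subsets of $\cM$, producing $I_0 \subseteq \kappa^+$ with $|I_0| = \kappa^+$ and a root $A \subseteq \cM$ such that $A_i \cap A_j = A$ for all distinct $i, j \in I_0$. The same arithmetic gives the cardinality bound $|\cN_i| \leq \kappa$ for each $i$, since $\cN_i = \tdcl[\cM](A_i)$ is the image under evaluation in $\cM$ of the $\Lang$-terms in $|A_i| < \lambda$ variables, of which there are at most $(|\Lang| + |A_i|)^{<\w} \leq \kappa^{<\w} = \kappa$. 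Note in particular that $\tdcl[\cM](A) \subseteq \cN_i \cap \cN_j$ for every pair.

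The second stage is the main step and the main obstacle: we must further thin $I_0$ so that the pairwise intersections stabilize on a single substructure $\cN$. A priori, the inclusion $\tdcl[\cM](A) \subseteq \cN_i \cap \cN_j$ can be strict, and the excess can depend on the pair, so purely generator-level information is not enough. My approach is to select the desired subfamily using an elementary submodel / genericity argument. Fix a regular cardinal $\theta$ large enough that $H(\theta)$ contains all relevant parameters, and build a continuous increasing chain $(M_\alpha)_{\alpha < \kappa^+}$ of elementary submodels $M_\alpha \prec H(\theta)$ with $|M_\alpha| = \kappa$, $[M_\alpha]^{<\lambda} \subseteq M_\alpha$, $\kappa \cup \{\cM, \Lang, A, (\cN_i)_{i \in I_0}\} \subseteq M_0$, and $M_\alpha \in M_{\alpha+1}$. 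For each $\alpha$ choose $i_\alpha \in I_0 \setminus M_\alpha$, which exists since $|I_0 \cap M_\alpha| \leq \kappa < \kappa^+$. For $\beta < \alpha$ we then have $\cN_{i_\beta} \in M_\alpha$ together with $|\cN_{i_\beta}| \leq \kappa \subseteq M_\alpha$, so elementarity forces $\cN_{i_\beta} \subseteq M_\alpha$, and therefore $\cN_{i_\alpha} \cap \cN_{i_\beta} \subseteq \cN_{i_\alpha} \cap M_\alpha$.

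The part I expect to be most delicate is then extracting from this setup a single substructure $\cN$ with $\cN_{i_\alpha} \cap \cN_{i_\beta} = \cN$ for every pair $\beta < \alpha$, rather than only having the varying upper bounds $\cN_{i_\alpha} \cap M_\alpha$. The strategy is to ``absorb'' the heavy elements — those lying in $\cN_i$ for cobounded-many $i \in I_0$ — into $\cN$ via an additional thinning of $I_0$, and to argue that any $b \in (\cN_{i_\alpha} \cap M_\alpha) \setminus \cN$ which also lay in $\cN_{i_\beta}$ would, by elementarity in $M_\alpha$, pin down $i_\alpha$ by a property definable from parameters in $M_\alpha$, contradicting $i_\alpha \notin M_\alpha$. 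The bookkeeping needed to show that the absorbing thinning leaves a subfamily of size $\kappa^+$, and that $\cN$ is genuinely a substructure of $\cM$ (closure of the heavy-element set under $\Lang$-operations follows from the fact that if $b_1, \ldots, b_n$ each belong to $\cN_i$ for cobounded-many $i$, then so does $f^{\cM}(b_1, \ldots, b_n)$), rests on the cardinality bound $|\cN_i| \leq \kappa$ and the $\Delta$-system structure of the generating sets established in the first stage.
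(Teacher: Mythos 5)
Your first stage is routine and correct, but your second stage --- the only place the term structure actually matters --- is a sketch whose key step does not go through. The elementarity argument yields less than you need: for $b \in \cN_{i_\alpha} \cap M_\alpha$ the set $J_b = \{i \in I_0 \st b \in \cN_i\}$ lies in $M_\alpha$ (it is definable from $b$ and the indexed family, both in $M_\alpha$) and contains $i_\alpha \notin M_\alpha$, so all you may conclude is $|J_b| = \kappa^+$, not that $J_b$ is cobounded in $I_0$. The collection of such ``$\kappa^+$-heavy'' elements is neither closed under the functions of $\Lang$ (two $\kappa^+$-sized subsets of $I_0$ can be disjoint) nor of size $\leq \kappa$, so it cannot be absorbed into a root $\cN$ that must sit inside every $\cN_i$. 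More decisively, no amount of thinning can close the gap from the stated hypotheses alone, because the lemma as literally stated is false: let $\Lang = \{f_n \st n \in \w\}$, let $\cM$ have underlying set $\w_1$, and for each $\alpha$ let $n \mapsto f_n(\alpha)$ enumerate $[0,\alpha]$; then $\tdcl[\cM](\{\alpha\}) = [0,\alpha]$, so $B = \{[0,\alpha] \st \alpha \in \w_1\}$ is a family of $\w_1$ distinct $1$-generated substructures forming a strictly increasing chain, which contains no $\Delta$-subsystem even of size $3$, although $\kappa = \lambda = \w$, $\kappa^{<\lambda} = \kappa$ and $|\Lang| \leq \kappa$. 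Any correct argument must therefore use something beyond the listed hypotheses.

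For comparison, the paper's proof is organized differently: it wellorders $\UnderSet{\cM}$ as $\kappa^+$, runs a single continuous chain $(W_i)_{i \leq \kappa}$ of elementary submodels with $W_i^{<\lambda} \subseteq W_{i+1}$, sets $\zeta = \kappa^+ \cap W_\kappa$, picks $B_0 \in B \setminus W_\kappa$ with root $r^+ = \tdcl[\cM](g(B_0) \cap \zeta)$ (where $g(B_0)$ is a chosen generating set), and uses reflection to produce $\kappa^+$ many members of $B$ containing $r^+$ whose parts outside $r^+$ occupy pairwise disjoint final segments of $\kappa^+$; no preliminary classical $\Delta$-system step on generators is taken. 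But the step that makes this work is precisely the assertion that every element of $\UnderSet{B_0} \setminus r^+$ lies outside $W_\kappa$, i.e.\ that $\UnderSet{B_0} \cap W_\kappa \subseteq \tdcl[\cM](g(B_0) \cap W_\kappa)$ --- exactly the point your ``absorption'' step was trying to finesse, and exactly what the chain example above violates. So you have correctly located the crux of the problem, but neither your sketch nor the bare hypotheses suffice to get past it; an additional assumption on how term closure interacts with generators (harmless in the paper's intended applications) is what is really being used.
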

\begin{proof}
First, note that if $\cM^\circ$ is the smallest substructure of $\cM$ generated by $\bigcup B$ then $|\cM^\circ| \leq |B| \cdot (< \lambda) \cdot |\Lang|= \kappa^+$. We can therefore assume without loss of generality that $|\cM| = \kappa^+$ and therefore we can assume without loss of generality that the underlying set of $\cM$ is the cardinal $\kappa^+$.  

Let $g\:B \to \cM^{<\lambda}$ be a map such that for all $b \in B$, $b$ is the substructure of $\cM$ generated by $g(b)$. 
%
%
Let $X  = \{\Lang, \Rel[\Lang], \Func[\Lang], \ar[\Lang]\} \cup \{B\} \cup \{g\} \cup \{\cM\}$. 

As $\kappa^{<\lambda} = \kappa$ we have $\lambda \leq \kappa$.
Let $\theta > \kappa$ be a large enough regular cardinal, let $\mathcal{H}(\theta)$ be the class of hereditarily sets of cardinality less than $\theta$, and let
$<^*$ be a well-ordering of $\mathcal{H}(\theta)$. Consider the structure $\mathcal{H}=(\mathcal{H}(\theta), \in, <^*).$
Let $(W_i)_{i \in \kappa+1}$ be an increasing and continuous sequence of elementary substructures of $\mathcal{H}$ such that 
\begin{itemize} 
\item[(a)] $X \in W_0$, 

\item[(b)] $|W_i| = \kappa$ for all $i \in \kappa$, 

\item[(c)] $W_i^{<\lambda} \subseteq W_{i+1}$ for all $i \in \kappa$. 

\item[(d)] if $x \in W_i \cap \ORD$ and $y \in \kappa^+$ with $y \in x$, then $y \in W_{i+1}$, 

\item[(e)] $W_{\gamma} = \bigcup_{i \in \gamma} W_{i}$, for limit ordinal $\gamma$.
\end{itemize}
Note we can always find such a sequence as $\kappa^{<\lambda} = \kappa$. 

By (d) we have $(\kappa^+  \cap W_\kappa) \in \ORD$. Let this value be $\zeta$. As $|W_\kappa| = \kappa$ we have $\zeta < \kappa^+$. Therefore there must be some $B_0 \in B$ such that $B_0 \not \in W_\kappa$. Let $r = g(B_0) \cap \zeta$ and let $r^+$ be the substructure of $\cM$ generated by $r$. Note $r$ and hence $r^+$ are in $W_\kappa$. Note that if $<_{\cM}$ is the ordering on $\cM$ given from the fact that the underlying set of $\cM$ is $\kappa^+$, then $(\forall x \in B_0 \setminus r^+)\, x \not \in W_\kappa$ and hence $x >_{\cM} \zeta$.

Suppose $\alpha \in \zeta$. We then have 
\[
\mathcal{H} \models (\exists B_0 \in B)\, \Bigl( r \subseteq \UnderSet{B_0} \,\And \,(\forall b \in \UnderSet{B_0} \setminus r^+)\, b >_{\cM} \alpha\Bigr). 
\]
Therefore we have for each $\alpha \in \zeta$ 
\[
W_\kappa \models (\exists B_0 \in B)\, \Bigl(r \subseteq \UnderSet{B_0} \, \And \, (\forall b \in \UnderSet{B_0} \setminus r^+)\, b >_{\cM} \alpha\Bigr).
\]
But then we also have 
\[
W_\kappa \models (\forall \alpha)(\exists B_0 \in B)\, \Bigl(r \subseteq \UnderSet{B_0} \,\And\, (\forall b \in \UnderSet{B_0} \setminus r^+)\, b >_{\cM} \alpha\Bigr).
\]
and so 
\[
\mathcal{H} \models (\forall \alpha)(\exists B_0 \in B)\,\Bigl( r \subseteq \UnderSet{B_0} \,\And\, (\forall b \in \UnderSet{B_0} \setminus r^+)\, b >_{\cM} \alpha\Bigr).
\]
But as $|b| \leq \kappa$ for all $b \in B$ we can find a sequence $(b_i)_{i \in \kappa^+}$ with $b_i \in B$ for all $i \in \kappa^+$ and such that 
\begin{itemize}
\item for all $i \in \kappa^+$, $r^+ \subseteq \UnderSet{b_i}$, 

\item for all $i < j \in \kappa^+$, $\max (\UnderSet{b_i}\setminus r^+) < \min (\UnderSet{b_j} \setminus r^+)$. 

\end{itemize}

Therefore $(b_i)_{i \in \kappa^+}$ is a $\Delta$-system. 
\end{proof}

Note that even in the case when $\kappa = \lambda = \w$ this result need not follow from the ordinary $\Delta$-systems lemma. This is because, in this case, the $\Delta$-systems lemma only deals with finite subsets of $\w_1$. However, in \cref{Delta-Systems Lemma} the elements of $B$ might be countable when there is a countable language. In our context though this isn't a problem as each of our sets is finitely generated and there cannot be a finitely generated substructure of $\cM$ which is a subset of $W_\w$ but is not also contained in $W_\w$.

We now prove our main theorem. 
\begin{theorem}
\label{Conditions which guarantee the chain condition}
Suppose 
\begin{itemize}

\item $\kappa^{<\lambda} = \kappa$, 

\item $\LangBot \subseteq \Lang$, $\Lang\setminus \LangBot$ is relational, and $|\Lang| \leq \kappa$, 

\item $\AgeK$ is a $\lambda$-bounded strong \Fraisse\ class with (C$\lambda$U), 

\item $\AgeK\rest[\LangBot]$ is a strong \Fraisse\ class, 

\item $\AgeK$ has the extended strong amalgamation property over $\LangBot$,

\item $\MBot$ is an $\LangBot$-structure, and

\item for every $\LangBot$-structure $\cX \subseteq \MBot$ which is $< \lambda$-generated there are at most $\kappa$-many elements $\cN \in \AgeK$ with $\cN \rest[\LangBot] = \cX$.
\end{itemize}
Then any subset $Y$ of $\Fn<\MBot>[\AgeK](\lambda)$ of size $\kappa^+$ has a subset $Y_0 \subseteq Y$ of size $\kappa^+$ such that every two elements of $Y_0$ have a common lower bound. 

In particular, this implies $\Fn<\MBot>[\AgeK](\lambda)$ has the $\kappa^+$-chain condition and, if $\kappa = \w_1$, $\Fn<\MBot>[\AgeK](\lambda)$ satisfies the Knaster condition. 
\end{theorem}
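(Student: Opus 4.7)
The plan is to thin $Y$ in three stages—first reducing to distinct $\LangBot$-reducts, then applying the Term $\Delta$-Systems Lemma inside $\MBot$ to extract a common root, and finally pinning down the full $\Lang$-structure on this root—after which the extended strong amalgamation property will produce common extensions.

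First, the hypothesis that each $<\lambda$-generated $\LangBot$-substructure of $\MBot$ admits at most $\kappa$ many $\Lang$-expansions in $\AgeK$ means that the map $\cA \mapsto \cA\rest[\LangBot]$ on $Y$ has fibers of size $\leq \kappa$. Since $|Y| = \kappa^+$ is regular, we may pass to a $\kappa^+$-sized subset on which this map is injective, yielding $\kappa^+$ many distinct $<\lambda$-generated $\LangBot$-substructures $\NBot_i \subseteq \MBot$ each paired with a chosen expansion $\cA_i \in \AgeK[\MBot]$. Apply \cref{Delta-Systems Lemma} inside $\MBot$ to this family to obtain, after re-indexing, an $\LangBot$-substructure $\NBot \subseteq \MBot$ with $\NBot_i \cap \NBot_j = \NBot$ for all $i < j \in \kappa^+$.

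Next, for each $i$ let $\cN_i$ be the $\Lang$-substructure of $\cA_i$ on the underlying set $\UnderSet{\NBot}$; this is well-defined because $\NBot$ is closed under $\LangBot$-functions and $\Lang \setminus \LangBot$ is relational. By \HP, $\cN_i \in \AgeK$, and $\cN_i\rest[\LangBot] = \NBot$. Applying the fiber-bound hypothesis to $\NBot$ itself, there are at most $\kappa$ choices for $\cN_i$, so a pigeonhole argument delivers a subset $Y_0$ of size $\kappa^+$ on which $\cN_i = \cN$ is constant.

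Finally, for any $i \neq j$ in $Y_0$ we have $\cN \subseteq \cA_i,\cA_j$ with $\UnderSet{\cA_i} \cap \UnderSet{\cA_j} = \UnderSet{\NBot} = \UnderSet{\cN}$, and the $\LangBot$-substructure $\DBot \subseteq \MBot$ generated by $\UnderSet{\cA_i} \cup \UnderSet{\cA_j}$ lies in $\AgeK\rest[\LangBot]$ and contains both $\LangBot$-reducts. Extended strong amalgamation over $\LangBot$ applied to $(\cN, \cA_i, \cA_j, \DBot)$ then yields $\cD \in \AgeK$ with $\cA_i,\cA_j \subseteq \cD$ and $\cD\rest[\LangBot] = \DBot \subseteq \MBot$. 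Because $\Lang \setminus \LangBot$ is relational, $\cD$ is generated by $\UnderSet{\cA_i} \cup \UnderSet{\cA_j}$, a set of size $<\lambda$, so $\cD \in \Fn<\MBot>[\AgeK](\lambda)$ is the sought common lower bound; the $\kappa^+$-chain condition and, for $\kappa = \w_1$, the Knaster property are immediate consequences. The main obstacle is arranging both a $\Delta$-system on the underlying sets and a uniform $\Lang$-type on the root simultaneously: the former is handled by \cref{Delta-Systems Lemma}, and the latter is precisely what the ``at most $\kappa$ expansions'' hypothesis was introduced to enable.
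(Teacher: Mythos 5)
Your argument is correct and follows essentially the same route as the paper's own proof: use the ``at most $\kappa$ expansions'' hypothesis to get $\kappa^+$ distinct $\LangBot$-reducts, apply the Term $\Delta$-Systems Lemma (\cref{Delta-Systems Lemma}) to extract a common root, apply the counting hypothesis again to fix the induced $\Lang$-structure on that root, and finish with the extended strong amalgamation property over $\LangBot$. The only difference is presentational (you argue directly for the linked subfamily rather than by contradiction), so there is nothing substantive to add.
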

\begin{proof}
Suppose, to get a contradiction, that $\{\cA_i\}_{i \in \kappa^+} \subseteq \Fn<\MBot>[\AgeK](\kappa)$ and for any $J \subseteq \kappa^+$ with $|J| = \kappa^+$ there are $x, y \in J$ such that $\cA_{x}$ and $\cA_y$ have no common extension in $\Fn<\MBot>[\AgeK](\lambda)$. 

Let $\mbf{A} = \{\cA_i\rest[\LangBot]\}_{i \in \kappa^+}$. For each $i \in \kappa^+$ we have $\cA_i$ is $< \lambda$-generated as $\AgeK$ is $\lambda$-bounded. Therefore there are at most $\kappa$ many elements $\cN \in \AgeK$ with $\cN \rest[\LangBot] = \cA_i\rest[\LangBot]$. Therefore  $\kappa^+ = |\{\cA_i\}_{i \in \kappa^+}| \leq |\mbf{A}| \cdot \kappa$ and hence $|\mbf{A}| = \kappa^+$. 

But by \cref{Delta-Systems Lemma}, we can find a subset $I_0 \subseteq \kappa^+$ with $|I_0| = \kappa^+$ and a $\cX \subseteq \MBot$ such that $(\forall i < j \in I_0)\, \UnderSet{\cA_i} \cap \UnderSet{\cA_j} = \UnderSet{\cX}$. But then $\cX \in \AgeK\rest[\LangBot]$ and so is $< \lambda$-generated. Therefore there are at most $\kappa$-many elements $\cN \in \AgeK$ with $\cN \rest[\LangBot] = \cX$. But then there must be a subset $I_1 \subseteq I_0$ such that $|I_1| = \kappa^+$ and $(\forall i \in I_1)\, (\cA_i\rest[\UnderSet{\cX}])\rest[\LangBot] = \cX$. 

For $i < j \in I_1$ let $\cB_{i, j}^-$ be the subset of $\MBot$ generated by $\UnderSet{\cA_i} \cup \UnderSet{\cA_j}$. As $\AgeK$ has the extended strong amalgamation property over $\LangBot$ there must be a $\cB_{i, j} \in \AgeK$ such that $\cA_i \subseteq \cB_{i, j}$, $\cA_j \subseteq \cB_{i, j}$ and $\cB_{i, j}\rest[\LangBot] = \cB_{i, j}^-$. Then $I_1$ contradicts our assumption. 
\end{proof}

We end with a straightforward observation about the theory of generics for these partial orders. 

\begin{proposition}
\label{Forcing satisfies Fraisse theory}
Suppose 
\begin{itemize}

\item $\LangBot \subseteq \Lang$, $\Lang \setminus \LangBot$ is relational, and $|\Lang| \leq \kappa$, 

\item $\AgeK$ and $\AgeK\rest[\LangBot]$ are strong \Fraisse\ classes, 

\item $\AgeK$ is $\lambda$-bounded and satisfies the $(C\lambda U)$,

\item $\AgeK$ has the extended strong amalgamation property over $\LangBot$,

\item $\MBot$ is an $\LangBot$-structure which satisfies the $\ThFraisse_\kappa(\AgeK\rest[\LangBot])$,  

\item $G$ is generic for $\Fn<\MBot>[\AgeK](\kappa)$ over $V$ and $\cG = \bigcup G$. 
\end{itemize}
Then $V$ and $V[G]$ have the same sequences of elements of $V$ of length less than $\lambda$ and
 $V[G] \models \text{``}\cG \models \ThFraisse_\kappa(\AgeK)$''.
\end{proposition}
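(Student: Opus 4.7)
The plan is to prove the two assertions separately. For the first, I would show that $\Fn<\MBot>[\AgeK](\kappa)$ is $\lambda$-closed by a direct adaptation of \cref{propclosure}: given a decreasing sequence $(\cA_i)_{i\in\gamma}$ of conditions with $\gamma<\lambda$, the axiom $(C\lambda U)$ places $\bigcup_{i\in\gamma}\cA_i$ in $\AgeK$, and then $\lambda$-boundedness of $\AgeK$ makes this union automatically $<\lambda$-generated (and thus $<\kappa$-generated); its $\LangBot$-reduct, being the union of a chain of $\LangBot$-substructures of $\MBot$, remains a substructure of $\MBot$. The preservation of $<\lambda$-sequences of ground-model elements then follows from $\lambda$-closedness by a standard forcing argument (cf.~\cite[Ch.~VII]{MR756630}).

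For the second assertion, I would verify that $\cG$ satisfies both schemata of \cref{Definition Fraisse Theory}, each via a density argument coordinating three amalgamation steps. For axiom~(a), fix $\cA\in\AgeK$ generated by $\aa$; the goal is to show that the set of $\cN\in\Fn<\MBot>[\AgeK](\kappa)$ containing a tuple of quantifier-free type $\qftp[\cA](\aa)(\xx)$ is dense. Starting from a condition $\cN$, I would select a copy $\cA'$ of $\cA$ with underlying set disjoint from $\cN$ above a common minimal substructure $\cA_0$; apply SAP in $\AgeK\rest[\LangBot]$ to form a disjoint amalgam $\cE\in\AgeK\rest[\LangBot]$ of $\cN\rest[\LangBot]$ and $\cA'\rest[\LangBot]$ over $\cA_0\rest[\LangBot]$; invoke axiom~(b) of $\ThFraisse_\kappa(\AgeK\rest[\LangBot])$ inside $\MBot$ to realize $\cE$ as a substructure $\cE'\subseteq\MBot$ extending $\cN\rest[\LangBot]$; and finally apply extended strong amalgamation over $\LangBot$ (with inputs $\cN$, $\cA'$, $\cA_0$, and $\cE'$) to obtain $\cD\in\AgeK$ with $\cN,\cA'\subseteq\cD$ and $\cD\rest[\LangBot]=\cE'\subseteq\MBot$. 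This $\cD$ is a stronger condition containing the desired copy of $\cA$.

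For axiom~(b), given $\cA\subseteq\cB$ in $\AgeK$ with generators $\aa$ and $\bb$ and a tuple $\cc$ in $\cG$ realizing $\qftp[\cA](\aa)$, the first assertion lets us treat $\cc$ as a tuple in $V$, and $\lambda$-closedness together with directedness of $G$ yields some $\cN_0\in G$ containing $\cc$. Density below $\cN_0$ of conditions realizing an extension of $\cc$ of type $\qftp[\cB](\aa\bb)$ is then established by the same three-step procedure, now amalgamating the current condition with $\cB$ over the canonical copy of $\cA$ determined by $\cc$. The principal obstacle throughout is the coordination of these three amalgamation steps: SAP in $\AgeK$ produces an abstract amalgam whose $\LangBot$-reduct must be realized concretely inside the fixed $\MBot$ (using that $\MBot$ satisfies the $\LangBot$-Fraisse theory), and the abstract amalgam must then be matched back to that concrete realization --- this matching is precisely what the extended strong amalgamation hypothesis supplies. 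A secondary subtlety, absent from the $\w$-bounded case, is that $\qftp[\cA](\aa)$ may be infinitary and the tuples $\aa,\bb,\cc$ may have length $<\lambda$, which is why $\lambda$-closure of the forcing is indispensable both for the first assertion and for locating $\cN_0\in G$ in the proof of axiom~(b).
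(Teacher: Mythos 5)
Your proof is correct and follows essentially the same route as the paper: $\lambda$-closure via \cref{propclosure} for the preservation of $<\lambda$-sequences, and density arguments combining disjoint amalgamation of the $\LangBot$-reducts, realization of the amalgamated reduct inside $\MBot$ via $\MBot \models \ThFraisse_\kappa(\AgeK\rest[\LangBot])$, and the extended strong amalgamation property to lift back to $\AgeK$ --- steps the paper leaves implicit when it asserts that $D_{\cA}$ and $D_{\cA,\cB}$ are dense. The only (immaterial) organizational difference is in axiom~(b): you localize below a condition $\cN_0 \in G$ containing the pulled-back tuple $\cc$, whereas the paper uses the globally dense set $D^1_{\cA,\cB}\cup D^2_{\cA,\cB}$ whose second piece serves as the escape clause for conditions not actually containing the given copy of $\cA$; both correctly use the preservation of $<\lambda$-sequences to handle the universal quantifier.
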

\begin{proof}
As $\AgeK$ satisfies the $(C\lambda U)$, by Proposition \ref{propclosure}, the forcing notion
$\Fn<\MBot>[\AgeK](\kappa)$ is $\lambda$-closed, and hence $V$ and $V[G]$ have the same sequences of elements of $V$ of length less than $\lambda$. 

For each $\cA \in \AgeK$ it is immediate that 
\[
D_{\cA} = \{\cB \in \AgeK[\MBot]\st \cA \subseteq \cB\}
\]
is dense. Therefore for all $\cA \in \AgeK$ and $\aa \in \cA$ with $\aa$ generating $\cA$, $\cG \models (\exists \xx)\, \bigwedge\qftp[\cA](\aa)(\xx)$.

Further, as $\AgeK$ has the extended strong amalgamation property over $\LangBot$ we have that for any $\cA, \cB \in \AgeK[\MBot]$ with $\cA \subseteq \cB$ we have $D_{\cA, \cB}$ is dense, where
$D_{\cA, \cB}=D^1_{\cA, \cB} \cup D^2_{\cA, \cB}$, and

\[
D^1_{\cA, \cB} = \{\cC \in \AgeK[\MBot] \st \cA \subseteq \cC \text{ and }(\exists \cB^* \in \AgeK[\MBot])\, \cA \subseteq \cB^* \subseteq \cC\text{ and }(\cA, \cB) \cong (\cA, \cB^*)\}
\]
and
\[
D^2_{\cA, \cB} = \{\cC \in \AgeK[\MBot] \st \cA\rest[\LangBot] \subseteq \cC\rest[\LangBot] \text{ and }\cA \not \subseteq \cC\}.
\]

Therefore if $\aa$ generates $\cA$ and $\bb$ generates $\cB$
\[
\cG \models (\forall \xx) \bigg[\bigwedge\qftp[\cA](\aa)(\xx) \rightarrow (\exists \yy)\, \bigwedge\qftp[\cB](\aa\bb)(\xx\yy)\bigg]. 
\]
But,  $V$ and $V[G]$ have the same sequences of elements of $V$ of length less than $\lambda$, and so in $V[G]$
\[
\cG \models (\forall \xx)\, \bigg[\bigwedge\qftp[\cA](\aa)(\xx) \rightarrow (\exists \yy)\, \bigwedge\qftp[\cB](\aa\bb)(\xx\yy)\bigg]. 
\]
\end{proof}

\section{Rigidity}  

There are many situations when the generics generated by forcing with an age give rise to rigid structures, even when the underlying functional structure is not rigid. In this section, we give general conditions which will ensure such generic structures are rigid. 

The conditions below which ensure rigidity of the generic are an abstraction of the techniques in \cite{Cohen} used to show rigidity of the uncountable generics for graphs, partial orders and linear orders. As we will see, the techniques in \cite{Cohen} make fundamental use of the fact that the languages are binary. 

In order to understand the motivation for the following technical definitions it is worth looking at a high level summary of the proof of rigidity, which occurs in several stages. Recall the set up is that we have languages $\LangBot \subseteq \Lang$ where $\Lang \setminus \LangBot$ is relational. We also have a strong \Fraisse\ class $\AgeK$ in $\Lang$ which satisfies the extended strong amalgamation property over $\LangBot$. Finally we have an uncountable $\LangBot$-structure $\MBot$ whose age is $\AgeK\rest[\LangBot]$. 

The proof occurs in several stages. First, consider pairs $(\cF, \cS)$ such that (a) $\cS$ is a sort of $\LangBot$ and where no non-identity automorphism of $\MBot$ is the identity on $\cS^{\MBot}$ and (b) $\cF$ is a subset $\MBot$ such that for two elements $x, y \in \cS^{\MBot}$ and any generic structure $\cG$ there is a tuple $\zz \in \cF$ such that $\qftp[\cG](x\zz) \neq \qftp[\cG](y\zz)$. We say such an $\cF$ has the \emph{splitting property} over $\AgeK$ and $\cS$ as we can always split the types of elements of $\cS$ using elements of $\cF$.

For such a pair any non-trivial automorphism $\cG$ does not fix $\cF$.  Specifically, if $h$ was a non-trivial automorphism of $\cG$ that fixed $\cF$, then there must be distinct  $x,y \in \cS^{\MBot}$ with $h(x) = y$. But then there would a $\zz \in \cF$ with $\qftp[\cG](x\zz) \neq \qftp[\cG](y\zz) = \qftp[\cG](h(x)h(\zz))$ getting us a contradiction. Our first assumption will be that there are many such pairs $(\cF_i, \cS_i)$ such that for distinct $i, j$, $\cF_i \cap \cF_j$ is the term closure of the empty set in $\MBot$.  

Now assume to get a contradiction that $\name{h}$ is the name for an automorphism in the generic extension $V[G]$. For any such $\cF$ we can find an $s_{\cF} \in\cF$ and a $\cB_{\cF}$ in our generic such that $\cB_{\cF}$ forces $\name{h}(s_{\cF}) \neq s_{\cF}$. As there are many such $\cF$ we can find a large collection such that all of $\cB_{\cF}$ are isomorphic. 

Using the analog of the $\Delta$-system lemma, we can therefore find a $\Delta$-system of elements $\cB_{\cF}$ such that for any such $\cF_0, \cF_1$ there is an isomorphism $i\:\cB_{\cF_0} \to \cB_{\cF_1}$ where $i(s_{\cF_0}) = s_{\cF_1}$ and $i(\name{h}(s_{\cF_0})) = \name{h}(s_{\cF_1})$. 

Our second assumption about the age, which we call the \emph{amalgamation of distinct $2$-types property}, essentially this says that whenever we have $xy$ and $x'y'$ of the same quantifier-free type we can find an amalgamation such that $\qftp(xx')  \neq \qftp(yy')$. By construction $\qftp[\cB_{\cF_0}](s_{\cF_0}\name{h}(s_{\cF_0})) = \qftp[\cB_{\cF_1}](s_{\cF_1}\name{h}(s_{\cF_1}))$. Therefore 
our second assumption will allow us to find an amalgamation $\cC$ of $\cB_{\cF_0}$ and $\cB_{\cF_1}$ with $\qftp(s_{\cF_0} s_{\cF_1}) \neq \qftp(\name{h}(s_{\cF_0}) \name{h}(s_{\cF_1}))$. But any such $\cC$ must forces that $\name{h}$ is not an automorphism, contradicting the fact that $\cB_{\cF_0}$ forces that $\name{h}$ is an automorphism and $\cB_{\cF_0} \subseteq \cC$.  

It is worth noting that in this argument we are focused on the quantifier-free type of pairs of elements. It is for this reason that in \cite{Cohen} all the examples of rigid structures where in binary languages. In  particular, the techniques do not apply, for example, to the case of a language $\Lang$ with a single $n$-ary relation ($n > 2$) and the age of all finite $\Lang$-structure. It is though natural to ask if an uncountable generic for such a structure must be rigid? We will show by expanding our structure to add names for all $n$-tuples, something which fundamentally requires functions, that our techniques prove such generics are rigid. 

We now turn to the definitions we will need for our main result of this section. 
\begin{definition}
\label{Amalgamation of distinct types property}

For a sort $\cS \in \LangBot$ we say an age $\AgeK$ in $\Lang$ has the \defn{amalgamation of distinct $2$-type property} (over $\LangBot$ and $\cS$) if whenever 
\begin{itemize}
   
\item[(a)] $\cA, \cB, \cC \in \AgeK$, 

\item[(b)] $\cA \subseteq \cB$, $\cA \subseteq \cC$, and $\UnderSet{\cB} \cap \UnderSet{\cC} = \UnderSet{\cA}$, 

\item[(c)] $\alpha\:\cB \to \cC$ is an isomorphism with $\alpha\rest[\cA] = \id_{\cA}$, 

\item[(d)] $x_0, x_1 \in \cS^{\cB} \setminus \cS^{\cA}$ are distinct elements with $\qftp[\cB](x_0) = \qftp[\cB](x_1)$,

\item[(e)] $\DBot \in \AgeK\rest[\LangBot]$ with $\cB\rest[\LangBot] \subseteq \DBot$ and $\cC\rest[\LangBot] \subseteq \DBot$, 
\end{itemize}
then there is a $\cD \in \AgeK$ such that 
\begin{itemize}
\item[(f)] $\cB \subseteq \cD$, $\cC \subseteq \cD$, 

\item[(g)] $\cD\rest[\LangBot] = \DBot$,

\item[(h)] $\qftp[\cD](x_0\alpha(x_0)) \neq \qftp[\cD](x_1\alpha(x_1))$
\end{itemize}

Note when there is only one sort we will omit mention of it. 
\end{definition}

It is worth noting that an age having the amalgamation of distinct $2$-types is not enough to ensure uncountable generics are rigid, at least when $\LangBot$ is not the empty language.  

\begin{example}    
\label{Example amalgamation of distict 2-types doesn't imply rigid generic}
Consider the language $\LangBot = \{U, W\}$ where $U$ and $W$ are unary relations. Let $\AgeKBot$ be the age consisting of all finite $\LangBot$-structures $\cA$ such that $\{U^{\cA}, W^{\cA}\}$ is a partition of the underlying set.  Let $\Lang = \LangBot \cup \{E\}$ and let $\AgeK$ be the collection of finite $\Lang$-structures such that if $E(x, y)$ holds, then $U(x) \leftrightarrow U(y)$. 

Now suppose $\cA, \cB, \cC, \DBot, x_0, x_1, \alpha$ are in \cref{Amalgamation of distinct types property}. As $\qftp[\cB](x_0) = \qftp[\cB](x_1)$ we know that either $\cB \models U(x_0) \And U(x_1)$ or $\cB \models W(x_0) \And W(x_1)$. In the first case, we then have $\DBot \models U(\alpha(x_0)) \And U(\alpha(x_1))$. But as $\alpha(x_0), \alpha(x_1) \not \in \cB$ there is a $\cD \in \AgeK$ with $\cD \rest[\LangBot] = \DBot$, $\cB, \cC \subseteq \cD$ and $\cD \models E(x_0, \alpha(x_0)) \And\neg E(x_1, \alpha(x_1))$.

But if $\cB \models W(x_0) \And W(x_1)$, then we can also find such a $\cD$ as the only restriction on the edge relation $E$ is that there are no edges between elements satisfying $U$ and elements satisfying $W$. In particular this proves that $\AgeK$ has the amalgamation of distinct $2$-type property over $\LangBot$.

Now suppose $\MBot_{\kappa, \w}$ is the unique $\LangBot$-structure (up to isomorphism) where $|U^{\MBot_{\kappa, \w}}| = \kappa$ and $|W^{\MBot_{\kappa, \w}}| = \w$. Suppose $\cM$ is any $\Lang$-structure whose age is contained in $\AgeK$. If $g_0\:\cM\rest[U^{\cM}] \to \cM\rest[U^{\cM}]$ is an automorphism and $g_1\:\cM\rest[W^{\cM}] \to \cM \rest[W^{\cM}]$ is an automorphism, then $g_0 \cup g_1\:\cM \to\cM$ is an automorphism. 
    
Let $G$ be a generic for $\Fn<\MBot_{\kappa, \w}>[\AgeK](\w)$ and let $\cG$ be the corresponding $\Lang$-structure. We then have that $\cR = (W^{\cG}, E^{\cG} \cap (W^{\cG} \times W^{\cG}))$ is isomorphic to the Rado graph. Let $I$ be the identity on $U^{\cG}$ and let $g$ be any automorphism of $\cR$ in $V[G]$. We then have that $I \cup g$ is an automorphism of $\cG$ and so $\cG$ is not rigid in $V[G]$.

\end{example}

In \cref{Example amalgamation of distict 2-types doesn't imply rigid generic} we were able to find generics which were not rigid because the underlying structure had a countable set whose structure was independent of the structure of the rest of the model. The next property which we will need guarantees that can't happen.

\begin{definition}
Suppose $\Lang$ is a language with sort $\cS$ and $\AgeK$ is an age over $\Lang$. We say $\cS$ \defn{pins} $\AgeK$ if whenever 
\begin{itemize}
\item $\cA, \cB \in \AgeK$, 

\item $i_0, i_1\:\cA \to \cB$ are isomorphisms, 

\item $i_0 \rest[\cS^{\cA}] = i_1 \rest[\cS^{\cA}]$ 
\end{itemize}
then $i_0 = i_1$. 
\end{definition}

In particular if $\cS$ pins $\AgeK$ and $\cM$ is any structure whose age is contained in $\AgeK$ then any non-trivial automorphism of $\cM$ must move an element of $\cS^\cM$. The following is an important example of this. 

\begin{example}
\label{Exaple of names for sequences} 
Let $\LangBot_{n} = \{U, W\} \cup \{f\} \cup \{\pi_i\}_{i \in [n]}$ where $U$ and $W$ are the sorts, $f\:U^n \to W$ and for each $i \in [n]$, $\pi_i\:W \to U$.  Let $\AgeKBot_{s, n}$ be the collection of structures such that 
\begin{itemize}  
\item $f$ is a bijection, 

\item for all $v \in W$, $f(\pi_0(v), \dots, \pi_{n-1}(v)) = v$. 
\end{itemize}

In this example we can think of elements of $W$ as \emph{names} for sequences of elements of $U$ of length $n$. 

If $\MBot$ is a $\LangBot$-structure with age $\AgeKBot_{s, n}$ then any non-trivial automorphism of $\MBot$ must be non-trivial on $W^{\MBot}$. Therefore $W$ pins $\AgeKBot_{s, n}$.    
\end{example}

Fundamental to the proof of the rigidity of generics is the fact that we can find many disjoint sets $\cF$ such that, whenever there is an automorphism $h$ of the structure, there is some element $s$ such that the quantifier-free type of $s$ over $\cF$ is different than the quantifier-free type of $h(s)$ over $\cF$. This will ensure that any non-trivial automorphism moves some element of $\cF$. We now make precise this property. 

\begin{definition}
\label{Secondary rigidity property}
Suppose $\cS$ pins $\AgeK$. We say $\cF \subseteq \MBot$ has the \defn{splitting property} (over $\AgeK$ and $\cS$ and $\MBot$) if whenever $\cA \in \AgeK[\MBot]$ and $x, y \in \cA^{\cS}$ there is a $\cB \in \AgeK[\MBot]$ with $\cA \subseteq \cB$ and a tuple $\zz \in \cB \cap \cF$ such that $\qftp[\cB](x \zz) \neq \qftp[\cB](y \zz)$.  
\end{definition}   

Note that if $\LangBot$ is the one sorted language with no relations and $\MBot$ is an infinite $\LangBot$-structure then every infinite subset $\cF\subseteq \MBot$ (of sufficient size) has the splitting property over any non-trivial age. As such this is a property which only is needed in the proofs when the language $\LangBot$ is non-trivial.

\begin{proposition}
\label{Main rigidity result}
Suppose $V$ is a model of $\ZFC$ in which the following hold. 
\begin{itemize}
\item $\kappa$ and $\lambda$ are cardinals with $\kappa^{<\lambda} = \kappa$ and $n \in \w$, 

\item $\LangBot \subseteq \Lang$ with $|\LangBot| \leq \kappa$ and $\Lang \setminus \LangBot$ relational. 

\item $\AgeK$ and $\AgeK\rest[\LangBot]$ are $\lambda$-bounded strong \Fraisse\ classes and $\AgeK$ has extended strong amalgamation over $\LangBot$, 

\item for all $\cA \in \AgeK$ there are, up to isomorphism which fix $\cA$, at most $\kappa$-many elements $\cB \in \AgeK$ with $\cA \subseteq \cB$.

\item $\AgeK\rest[\LangBot]$ is a strong \Fraisse\ class. 

\item $\MBot\models \ThFraisse_\lambda(\AgeK\rest[\LangBot])$
%

\item there is a collection of sorts $\{\cS_i\st i \in \kappa^+\}$, a collection of sorts $\{\cT_i\st i \in \kappa^+\}$, and a collection of sets  $\{\cF_i \st i \in \kappa^+\}$ such that for all $i \in \kappa^*$
\begin{itemize}

\item $\cF_i \subseteq \cS_i^{\MBot}$ and $\cF_i \in V$ 

\item $\AgeK$ has the amalgamation of distinct $2$-type property over $\LangBot$ and $\cS_i$, 

\item $\cT_i$ pins $\AgeK$

\item $\cF_i$ has splitting property over $\AgeK$ and $\cT_i$, 

\end{itemize}
and $(\forall i < j \in \kappa)\, \UnderSet{\cF_i} \cap \UnderSet{\cF_j} = \UnderSet{\tdcl[\MBot](\emptyset)}$

\end{itemize}
Then whenever $G$ is a generic for $\Fn<\MBot>[\AgeK\rest[\LangBot]](\lambda)$ over $V$, $\bigcup G$ is rigid in $V[G]$. 
\end{proposition}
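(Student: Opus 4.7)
The plan is to assume for contradiction that some $p \in \Fn<\MBot>[\AgeK](\lambda)$ forces $\name{h}$ to be a non-trivial automorphism of $\cG = \bigcup G$ and derive a contradiction by combining the pinning, splitting, and amalgamation-of-distinct-$2$-type hypotheses with the term $\Delta$-systems lemma (\cref{Delta-Systems Lemma}). The first step is to show, for each $i \in \kappa^+$, that the set of $\cC \leq p$ forcing $\name{h}$ to move some element of $\cF_i$ is dense below $p$. Suppose not; then some $\cC \leq p$ forces $\name{h}$ to fix $\cF_i \cap \cG$ pointwise while remaining non-trivial. Applying the pinning of $\cT_i$ to $h$-invariant $<\lambda$-generated substructures of $\cG$, $\name{h}$ must move some element of $\cT_i^\cG$, so by genericity there is $\cC' \leq \cC$ and distinct $s, s' \in \cT_i^{\cC'}$ with $\cC' \Vdash \name{h}(s) = s'$. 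The splitting property of $\cF_i$ over $\cT_i$ then produces $\cB \in \AgeK[\MBot]$ with $\cC' \subseteq \cB$ and a tuple $\zz \in \cB \cap \cF_i$ such that $\qftp[\cB](s\zz) \neq \qftp[\cB](s'\zz)$. But $\cB \leq \cC$ forces $\name{h}(\zz) = \zz$ and $\name{h}(s) = s'$, so $\name{h}$ being an automorphism forces $\qftp[\cG](s\zz) = \qftp[\cG](s'\zz)$; since $\cB \subseteq \cG$ preserves quantifier-free types, this contradicts the previous inequality.

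By genericity and the first step, for each $i$ I choose $\cB_i \leq p$, $t_i \in \cF_i \cap \UnderSet{\cB_i}$, and $t_i' \in \UnderSet{\cB_i}$ with $t_i \neq t_i'$ and $\cB_i \Vdash \name{h}(t_i) = t_i'$; after a mild extension I arrange $t_i \notin \tdcl[\MBot](\emptyset)$, so the $t_i$ are pairwise distinct because $\cF_i \cap \cF_j = \UnderSet{\tdcl[\MBot](\emptyset)}$. Since $\name{h}$ is forced to be an automorphism, $t_i' \in \cS_i^{\cB_i}$ and $\qftp[\cB_i](t_i) = \qftp[\cB_i](t_i')$. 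Since $|\LangBot| \leq \kappa$ and $\AgeK$ has at most $\kappa$ isomorphism types of extensions of any fixed structure, pigeonholing lets me pass to a subset of size $\kappa^+$ on which $\cS_i = \cS$, $\cT_i = \cT$, and the $(\cB_i, t_i, t_i')$ are pairwise isomorphic. Applying the term $\Delta$-systems lemma to $\{\cB_i\}$ inside $\MBot$ and refining again, I obtain a $<\lambda$-generated common core $\cA \subseteq \MBot$ with $\UnderSet{\cB_i} \cap \UnderSet{\cB_j} = \UnderSet{\cA}$ for $i \neq j$; a final pigeonhole over isomorphism types of extensions of $\cA$ yields, for each pair $i < j$, an isomorphism $\alpha_{ij}\: \cB_i \to \cB_j$ with $\alpha_{ij}\rest[\cA] = \id$, $\alpha_{ij}(t_i) = t_j$, and $\alpha_{ij}(t_i') = t_j'$. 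Note that $t_i, t_i' \notin \cA$: if $t_i \in \cA$, then $\alpha_{ij}$ fixing $\cA$ gives $t_i = \alpha_{ij}(t_i) = t_j$, contradicting $\cF_i \cap \cF_j = \UnderSet{\tdcl[\MBot](\emptyset)}$ and $t_i \notin \tdcl[\MBot](\emptyset)$; if $t_i' \in \cA$, then all $t_i'$ coincide, contradicting injectivity of $\name{h}$.

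Fix $i_0 \neq i_1$ in this final set and let $\DBot \subseteq \MBot$ be the $\LangBot$-substructure generated by $\UnderSet{\cB_{i_0}} \cup \UnderSet{\cB_{i_1}}$, which lies in $\AgeK\rest[\LangBot]$ since $\MBot \models \ThFraisse_\lambda(\AgeK\rest[\LangBot])$. I then apply the amalgamation of distinct $2$-type property of $\AgeK$ over $\LangBot$ and $\cS$ with structures $\cA \subseteq \cB_{i_0}, \cB_{i_1}$, isomorphism $\alpha_{i_0 i_1}$, and elements $x_0 = t_{i_0}, x_1 = t_{i_0}' \in \cS^{\cB_{i_0}} \setminus \cS^{\cA}$, which have the same quantifier-free type in $\cB_{i_0}$. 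This produces $\cD \in \AgeK[\MBot]$ with $\cB_{i_0}, \cB_{i_1} \subseteq \cD$, $\cD\rest[\LangBot] = \DBot$, and $\qftp[\cD](t_{i_0}\, \alpha_{i_0 i_1}(t_{i_0})) \neq \qftp[\cD](t_{i_0}'\, \alpha_{i_0 i_1}(t_{i_0}'))$, i.e., $\qftp[\cD](t_{i_0} t_{i_1}) \neq \qftp[\cD](t_{i_0}' t_{i_1}')$. But $\cD$ extends both $\cB_{i_0}$ and $\cB_{i_1}$ in the forcing order, so $\cD \Vdash \name{h}(t_{i_0}) = t_{i_0}'$ and $\cD \Vdash \name{h}(t_{i_1}) = t_{i_1}'$; since $\name{h}$ is forced to be an automorphism, $\cD \Vdash \qftp[\cG](t_{i_0} t_{i_1}) = \qftp[\cG](t_{i_0}' t_{i_1}')$, and because $\cD \subseteq \cG$ preserves quantifier-free types, this contradicts the previous inequality.

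The most delicate point is the first step: translating the pinning hypothesis, stated intrinsically for objects of $\AgeK$, into a density statement about forced automorphisms of $\cG$. This requires locating $h$-invariant $<\lambda$-generated substructures of $\cG$ that witness non-triviality, where $\lambda$-boundedness of $\AgeK$ must interact with the orbit structure of $\name{h}$. A secondary subtlety is the layered refinement in the second paragraph---isomorphism-type pigeonhole, then term $\Delta$-system, then pigeonhole over extensions of $\cA$---in just the right order to secure isomorphisms that fix $\cA$ pointwise while respecting the marked elements $t_i, t_i'$, as required by the hypotheses of the amalgamation of distinct $2$-type property.
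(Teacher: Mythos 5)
Your proof is correct and follows essentially the same route as the paper's: a pinning-plus-splitting argument showing $\name{h}$ must move an element of each $\cF_i$, then the term $\Delta$-systems lemma and pigeonholing over isomorphism types to produce isomorphic conditions with marked points over a common core, and finally the amalgamation of distinct $2$-type property to build a condition forcing $\name{h}$ to fail to preserve quantifier-free types. Your verification of hypothesis (d) of \cref{Amalgamation of distinct types property} (that the marked points lie outside the core and have equal quantifier-free types) is somewhat more explicit than the paper's, but the argument is the same.
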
 
\begin{proof}
Let $\cG = \bigcup G$. Suppose, to get a contradiction, this result fails. Then there must be some name $\name{h}$ for $\Fn<\MBot>[\AgeK](\lambda)$ in $V$ and $\cA \in G$ such that 
\[
\cA \forces[\Fn<\MBot>[\AgeK](\lambda)] \name{h}\text{ is a non-trivial automorphism of }\cG.
\]  
%
%

We now show that for each $\cF_i \in V$ the automorphism given by $\name{h}^G$ is not the identity on those $\cF_i$.
\begin{claim}
\label{Main rigidity result: Claim 1}
Suppose $\cT$ pins $\AgeK$, $\cF \subseteq \MBot$ with $\cF \in V$ and $\cF$ has splitting property over $\MBot$ and $\cT$. Then $V[G] \models \name{h}^G\rest[{\cF}] \neq \id$
\end{claim}
\begin{proof}
Suppose to get a contradiction that $\name{h}^{G}\rest[{\cF}] = \id$. Let $s, t \in \cT^{\MBot}$ be distinct with $\name{h}^{G}(s) = t$. 

Let $\cB \in G$ such that $\cA \subseteq \cB$ and $\cB \forces[\Fn<\MBot>[\AgeK](\lambda)] \name{h}(s) = t$. Let $\cA^- = \cA\rest[\LangBot]$. 

Let $E$ be the collection of $\cE \in \AgeK[\MBot]$ such that 
\begin{itemize}

\item $\cB \subseteq \cE$,

\item $s, t \in \cE$, 

\item $(\exists \zz_{\cE} \in \cE \cap \cF)\, 
\qftp[\cE](s\zz_{\cE}) \neq \qftp[\cE](t\zz_{\cE})\}$. 
\end{itemize}
Then $E$ is dense below $\cB$ because $\cF$ has the splitting property with respect to $\MBot$ and $\cT$ and $s, t \in \cT^{\MBot}$. Therefore there must be some  $\cD \in G \cap E$. But then $\qftp[\cG](s\zz_{\cD}) = \qftp[\cG](\name{h}^G(s)\name{h}^G(\zz_{\cD})) = \qftp[\cE](t\zz_{\cD})$, contradicting the fact that $\cD \in E$. 
\end{proof}

By the claim, for each $i \in \kappa^+$, there are $\cB_i \in \AgeK$, $s_i \in \MBot$ and $t_i \in \MBot$ such that $\cA \subseteq \cB_i$, $s_i \in \cF_i$ and 
\[
\cB_i \forces[\Fn<\MBot>[\AgeK](\lambda)] \name{h}(s_i) = t_i\text{ and }t_i \neq s_i.
\]
Further as the $\cF_i$ are disjoint outside $\tdcl[\MBot](\emptyset)$ and $\tdcl[\MBot](\emptyset)$ is fixed by any automorphism of $\MBot$ we must have $s_i \neq s_j$ for $i < j \in \kappa^+$. 

But then by \cref{Delta-Systems Lemma} we can find an $I \subseteq \kappa^+$ with $|I| = \kappa^+$ such that for all $i < j \in I$, $\cA, \cB_i, \cB_j$ satisfy \cref{Amalgamation of distinct types property} (b). Further, as there are at most $\kappa$-many extensions of $\cA$ up to isomorphism  we can assume $I$ was chosen so that \cref{Amalgamation of distinct types property} (c) holds as well. Let $\alpha_{i, j}$ be an isomorphism from $\cB_i$ to $\cB_j$ guaranteed by \cref{Amalgamation of distinct types property} (c). As each $\cB_i$ must have cardinality $\leq \kappa$, we can further assume without loss of generality that $\alpha_{i, j}(s_i) = s_j$ and $\alpha_{i, j}(t_i) = t_j$ for all $i, j \in I$.

For $i, j \in I$ let $\DBot_{i, j}$ be the substructure of $\MBot$ generated by $\UnderSet{\cB_i} \cup \UnderSet{\cB_j}$. Then \cref{Amalgamation of distinct types property} (d) and (e) hold. Therefore there must be a $\cD  \in \AgeK[\MBot]$ such that $\cD\rest[\LangBot] = \DBot$, $\cB_i \subseteq \cD$, $\cB_j \subseteq \cD$ and $\qftp[\cD](s_is_j) \neq \qftp[\cD](t_it_j)$. 

But then 
\[
\cD \forces[\Fn<\MBot>[\AgeK](\lambda)] \name{h}\text{ is not an isomorphism }
\]
getting us our contradiction. 
\end{proof}

We now give some examples of how to apply \cref{Main rigidity result}

\begin{example}
\label{Example orientation of 2-dimensional subspace} 
Suppose $\Field$ is a finite field. Let $\LangBot_{\Field}$ and $\AgeKBot_{\Field}$ be the language and age in \cref{Example: Colored vector spaces} associated to finite vector spaces over $\Field$. 

For $\kappa$ a cardinal let $\MBot_{\Field, \kappa}$ be the $\LangBot_{\Field}$-structure with age $\AgeKBot_{\Field}$ which is a $\kappa$-dimensional $\Field$-vector space. Note $\MBot_{\Field, \kappa} \models \ThFraisse_\w(\AgeKBot_{\Field})$ whenever $\kappa$ is infinite. 

Let $\Lang = \LangBot_{\Field} \cup \{R\}$ where $R$ is a binary relation. Let $\AgeK$ be the age in $\Lang$ such that for all $\cA \in \AgeK$
\begin{itemize}
\item $\cA\rest[\LangBot_{\Field}] \in \AgeKBot_{\Field}$, 

\item For every embedding $i\:\MBot_{\Field, 2} \to \cA\rest[\LangBot_{\Field}]$ there is a unique pair of independent elements $a, b \in \MBot_{\Field, 2}$ such that $\cA \models R(i(a), i(b))$. 
\end{itemize}

We can think of $\AgeK$ as the age of vector spaces over $\Field$ where we have, for ever $2$-dimensional subspace, a distinguished orientation. 

Note it is straightforward to check that both $\AgeKBot_{\Field}$ and $\AgeK$ are strong \Fraisse\ classes and that $\AgeK$ has the extended strong amalgamation property over $\LangBot_{\Field}$. 

To show that $\AgeK$ has the amalgamation of distinct $2$-types property we now let $\cA, \cB, \cC, \DBot, \alpha, x_0, x_1, \cS$ be as in \cref{Amalgamation of distinct types property} where $\cS$ is the unique sort and $\LangBot = \LangBot_{\Field}$. Let $W_0$ be the vector space generated by $x_0\alpha(x_0)$ and $W_1$ be the vector space generated by $x_1\alpha(x_1)$. Note that $W_0 \cap \cB$ consists of the $1$-dimensional vector space containing $x_0$ and similarly $W_1 \cap \cB$ consists of the $1$-dimensional vector space containing $x_1$. Therefore $R$ does not holds of any tuple in $W_0 \cap \cB$ nor of any tuple in $W_1 \cap \cB$. Therefore there must be some $\cD \in \AgeK[\cD^-]$ where $\cD \models R(x_0\alpha(x_0))$ and $\cD \models \neg R(x_1\alpha(x_1))$. Such a $\cD$ witnesses the amalgamation of distinct $2$-types property.   

Now let $(\cF_i)_{i \in \kappa^+}$ be a collection of infinite dimensional subspaces of $\MBot_{F, \kappa^+}$ such that for $i < j \in \kappa^+$, $\cF_i \cap \cF_j = \{v_0^{\MBot_{F, \kappa^+}}\}$. It is immediate that each $\cF_i$ has the splitting property for amalgamation property (with respect to $\cS$). 

Therefore, by \cref{Main rigidity result}, if $G$ is a generic for $\Fn<\MBot_{\Field, \kappa^+}>[\AgeK](\w)$ over $V$, $\bigcup G$ is rigid in $V[G]$. In other words if we choose the orientations of $2$-dimensional subspaces generically the result is a rigid structure.  
\end{example}

Note \cref{Example orientation of 2-dimensional subspace} was only done for $2$-dimensional subspaces. This is because the conditions which go into \cref{Main rigidity result} only talk about pairs of elements. The following example gives us a way, sometimes, to get around this limitation. In particular this will allow us to show if $R$ is a $n$-ary relation (with $n \geq 5$),  $\AgeK_R$ is the age consisting of all finite structures in the language $\{R\}$, and $\NBot$ is the $\kappa$-sized structure in the empty language than any generic for $\Fn<\NBot>[\AgeK_R](\w)$ gives rise to a rigid structure. This generalizes Theorem 4 of \cite{Cohen} to the higher arity case. 

\begin{example} 
\label{Example of 5-ary directed hypergraphs}
Let $n \geq 5$ and $\LangBot_{s, n}$ and $\AgeKBot_{s, n}$ be as in \cref{Exaple of names for sequences}. Let $\Lang_{s,n} = \LangBot_{s, n} \cup \{R\}$ where $R$ is a relation of arity $U^n$. Let $\AgeK<s,n>$ be the age in $\Lang_{s, n}$ consisting of all elements whose restriction to $\LangBot_{s, n}$ is in $\AgeKBot_{s, n}$. Elements of $\AgeK_{s, n}$ can be thought of as arbitrary structures in the language $\{R\}$ along with names for each sequence of length $n$. 

It is immediate that $\AgeK<s, n>$ is a strong \Fraisse\ class as $\AgeKBot_{s, n}$ is a strong  \Fraisse\ class. It is also immediate that $\AgeK<s,n>$ has the extended strong amalgamation property over $\Lang_{s,n}$. Also, note that the sort $W$ pins $\AgeKBot_{s, n}$. 

If $\MBot_{s, n}$ is a $\LangBot_{s, n}$-structure such that $\MBot_{s, n} \models \ThFraisse_\w(\AgeKBot_{s, n})$ then $\MBot$ is completely determined, up to isomorphism, by its cardinality. For $\kappa$ an infinite cardinal let $\MBot_{\kappa} \models \ThFraisse_\w(\AgeKBot_{s, n})$ be the unique model (up to isomorphism) where $|\MBot_{\kappa}| = \kappa$. 

To show that $\AgeK<s, n>$ has the amalgamation of distinct $2$-types property we now let $\cA, \cB, \cC, \DBot, \alpha, x_0, x_1, \cS$ be as in \cref{Amalgamation of distinct types property} where $\cS = W$ and $\LangBot = \LangBot_{s, n}$. For $i \in [n]$ let $y_i = \pi_i(x_0)$ and $z_i = \pi_i(x_1)$. There must be some $\ell \in [n]$ such that $y_\ell \neq z_\ell$. As $n \geq 5$ we can assume without loss of generality that $\ell > 3$. There also must be some $k_0, k_1 \in [n]$ such that $\alpha(y_{k_0}), \alpha(z_{k_1}) \not \in \cB$ as otherwise $\alpha(x_{0}), \alpha(y_{1}) \in \cB$. This then implies that $y_{k_0}, z_{k_1} \not \in \cC$.  
 
Define 
\[
a = f(\alpha(y_{k_0}), \alpha(y_{k_1}), y_{k_0}, y_{k_1}, \dots, y_\ell, \dots, y_{n-1})
\]
and define 
\[
b = f(\alpha(z_{k_0}), \alpha(z_{k_1}), z_{k_0}, z_{k_1}, \dots, z_\ell, \dots, z_{n-1}).
\]
Note that $a \neq b$ as $y_\ell \neq z_\ell$. Further $a \not \in \{x_0, x_1\}$ as $\alpha(y_{k_0}) \not \in \cB$ and $a \not \in\{\alpha(x_0),\alpha(x_1)\}$ as $y_{k_0} \not \in \cC$. We have similar conclusions about $b$. 

Therefore there is some $\cD \in \AgeK<s, n>[\DBot]$ with $\cB \subseteq \cD$, $\cC \subseteq \cD$ and 
\[
\cD \models R(\alpha(y_{k_0}), \alpha(y_{k_1}), y_{k_0}, y_{k_1}, \dots, y_\ell, \dots, y_{n-1}) \And\neg R(\alpha(z_{k_0}), \alpha(z_{k_1}), z_{k_0}, z_{k_1}, \dots, z_\ell, \dots, z_{n-1}).
\]
Therefore $\AgeK<s, n>$ has the amalgamation of distinct $2$-types property. 

Note a more detailed analysis of which elements of $a$ and $b$ were in $\cA$ would have shown that $\AgeK<s,4>$ and $\AgeK<s, 3>$ had amalgamation of distinct $2$-types property. However such an analysis would not have added much to the example. In contrast, $\AgeK<s, 2>$ does not have amalgamation of distinct $2$-types property. To see this consider $x_0 = f(y_0, y_1)$ and $x_1 = f(y_0, z_1)$ with $y_1, z_1 \in \cA$. Then $\qftp[\cD](y_0, y_1, \alpha(y_0), \alpha(y_1)) = \qftp[\cD](y_0, y_1, \alpha(y_0), y_1)$  which is completely determined by $\qftp[\cD](y_0, \alpha(y_0))$, $\cB$ and $\cC$. But we similarly have $\qftp[\cD](y_0, z_1, \alpha(y_0), \alpha(z_1))= \qftp[\cD](y_0, z_1, \alpha(y_0), z_1)$ which is also completely determined by $\qftp[\cD](y_0, \alpha(y_0))$, $\cB$ and $\cC$. 

Let $P$ be a partition of $U^{\MBot_{s, \kappa^+}}$ into countable infinite sets. For $p \in P$ let $\cF_p = \{f(x_0, \dots, x_{n-1}) \st x_0 \in p\And x_1, \dots, x_{n-1} \in U^{\MBot_{s, \kappa^+}}\}$. Now let $\cA \in \AgeK<s, n>[\MBot_{s\kappa^+}]$ and let $a_0, a_1 \in W^{\cA}$ be distinct. As $\cA$ is finite, given any $p \in P$ we can find an $b \in \cF_p$ such that $\pi_0(b) \not \in \cA$. We can therefore find a $\cB \in \AgeK<s, n>[\MBot_{s, \kappa^+}]$ with $\cA \subseteq \cB$ such that 
\[
\cB \models R(\pi_0(b), \pi_1(a_0), \dots, \pi_{n-1}(a_{0})) \And \neg R(\pi_0(b), \pi_1(a_1), \dots, \pi_{n-1}(a_{1})). 
\]
Therefore $\cF_p$ has the splitting property over $\AgeK<s, n>$ and $W$. Further, for $p_0, p_1 \in P$ distinct, we have $\cF_{p_0} \cap \cF_{p_1} = \emptyset$. Therefore, as $|P| = \kappa^+$, we can apply \cref{Main rigidity result} to get  whenever $G$ is a generic for $\Fn<\MBot_{s, \kappa^+}>[\AgeK<s, n>](\w)$ over $V$, $\bigcup G$ is rigid in $V[G]$.

Finally, suppose $\LangBot_* = \{U\}$ where $U$ is a sort and $\Lang_* = \LangBot_* \cup \{R\}$ where $R$ is a relation of type $U^n$.  If $\MBot$ is an $\LangBot_*$ structure then there is a unique expansion of $\MBot$ to a $\LangBot_{s, n}$-structure whose age is in $\AgeKBot_{s, n}$. Similarly if $\cM$ is a $\Lang_*$-structure then there is a unique expansion of $\cM$ to a $\Lang_{n, s}$-structure whose age is in $\AgeK<n, s>$. Therefore if $\AgeK_R$ is the age consisting of all $\Lang_*$-structures and $\NBot_{\kappa^*}$ is the unique $\LangBot_*$-structure of size $\kappa^+$, whenever $G$ is a generic for $\Fn<\NBot_{\kappa^+}>[\AgeK_R](\w)$ over $V$, $\bigcup G$ is rigid in $V[G]$. 
\end{example}

In \cref{Example of 5-ary directed hypergraphs}, we showed that forcing with arbitrary structures in a relation of arity $\geq 5$ results in a rigid structure in the generic extension. This required us to have access to function symbols to name the tuples and so, even though the structures we are interested in, i.e. $\Lang_*$-structures, are in a relational language, we needed to be able to deal with function symbols to show the rigidity (at least if we wanted to use these techniques). 

Note though that we dealt with arbitrary $\Lang_*$-structures and the techniques we applied do not immediately generalize to deal with symmetric $n$-hypergraphs. There are two natural approaches one might try to generalized these techniques. First one could have  names for all tuples of length $n$ and then simply require that whenever the relation holds it must hold of all permutations of its arguments. This however would cause a problem as if $a, b$ were names of permutations of the same tuple there would be no way to distinguish them in the structure and amalgamation of distinct $2$-types property would fail.

The second natural approach one might try is to provide elements which ``name'' a set of size $n$, instead of sequences. However in this case there is no obvious way to recover the elements of the set from the name using just function symbols. This is because if $y$ is a name for $\{x_0, \dots, x_{n-1}\}$ and $x_0 = t(y)$ for some term, then any automorphism of the structure which fixes $y$ also fixes $x_0$. This therefore adds structure to the set of elements $\{x_0, \dots, x_{n-1}\}$. 

We therefore have the following conjecture.  

\begin{conjecture}
Suppose $n \geq 2$. Let $\LangBot$ be the empty language and let $\Lang_n = \{R\}$ where $R$ is a relation of arity $n$. Let $\AgeK<sym,n>$ consist of all those finite $\Lang_n$-structures where $R$ is symmetric, i.e. which satisfy 
\[
\bigwedge_{\tau \in \Sym{[n]}} (\forall x_0, \dots, x_{n-1})\, R(x_0, \dots, x_{n-1}) \leftrightarrow R(x_{\tau(0)}, \dots, x_{\tau(n-1)}).
\]

Then for any uncountable cardinal $\kappa$, if $\MBot_{\kappa}$ is the unique $\LangBot$-structure of size $\kappa$ then whenever $G$ is a generic for $\Fn<\MBot_{\kappa}>[\AgeK<{sym, n}>](\w)$ over $V$, $\bigcup G$ is rigid in $V[G]$. 
\end{conjecture}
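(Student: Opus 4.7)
The paragraph preceding the conjecture identifies the main obstruction: in the purely relational language $\Lang_n = \{R\}$ with $R$ symmetric of arity $n \geq 3$, the quantifier-free $2$-type of a distinct pair $(u,v)$ reduces to $u \neq v$, so the amalgamation of distinct $2$-types property of \cref{Amalgamation of distinct types property} is vacuously false and \cref{Main rigidity result} cannot be invoked as a black box. My plan is to reprove the rigidity conclusion from scratch, replacing the $\Delta$-system $+$ $2$-amalgamation step by a $\Delta$-system $+$ $n$-amalgamation step, so that a single $R$-edge among $n$ chosen witnesses supplies the type distinction that pairs cannot. The case $n = 2$ is the random graph and is already handled by Cohen's work.

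Fix an uncountable $\kappa$ and suppose toward contradiction that $\cA \in G$ forces a name $\name{h}$ to be a non-trivial automorphism of $\cG = \bigcup G$. Choose $\w_1$ many pairwise disjoint countably infinite subsets $(\cF_i)_{i \in \w_1}$ of $\MBot_\kappa$. Each $\cF_i$ has a splitting property via $(n{-}1)$-tuples: for distinct $x, y$ and any finite condition $\cA_0$, pick a fresh $(n{-}1)$-tuple $\zz$ from $\cF_i$ and extend $\cA_0$ to a condition containing $x, y, \zz$ and satisfying $R(x, \zz) \wedge \neg R(y, \zz)$. A variant of \cref{Main rigidity result: Claim 1}, with $2$-types of pairs replaced by $n$-types using such $(n{-}1)$-tuple witnesses, then yields for each $i \in \w_1$ a condition $\cB_i \supseteq \cA$ together with $s_i \in \cF_i$ and $t_i \neq s_i$ such that $\cB_i \forces \name{h}(s_i) = t_i$. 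The $s_i$ are pairwise distinct by disjointness of the $\cF_i$, and so are the $t_i$ as $\name{h}$ is forced injective; a simple pigeonhole refines the index set so that $\{s_i\} \cap \{t_i\} = \emptyset$.

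Apply \cref{Delta-Systems Lemma} with parameter $\w$ to the underlying sets $\UnderSet{\cB_i}$ to extract a $\Delta$-subsystem of size $\w_1$ with common part $\cA'$; the countably many isomorphism types of the finite structures $(\cA, \cB_i, s_i, t_i)$ allow a further pigeonhole producing pairwise isomorphisms $\alpha_{i,j}\colon \cB_i \to \cB_j$ fixing $\cA'$ and sending $s_i \mapsto s_j$, $t_i \mapsto t_j$. Pick any $n$ indices $i_1 < \cdots < i_n$ from the refined system. By strong amalgamation for $\AgeK<sym,n>$ over the empty language, form $\cD$ as the disjoint union of the $\cB_{i_k}$ over $\cA'$, then add the symmetric edge $R(s_{i_1}, \dots, s_{i_n})$ and decree $\neg R(t_{i_1}, \dots, t_{i_n})$, setting $R$ arbitrarily (respecting symmetry) on all other cross tuples. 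Then $\cD \in \AgeK<sym,n>[\MBot_\kappa]$ extends each $\cB_{i_k}$, so $\cD \forces \name{h}(s_{i_k}) = t_{i_k}$ for every $k$, yet $\cD \models R(s_{i_1}, \dots, s_{i_n}) \wedge \neg R(t_{i_1}, \dots, t_{i_n})$, contradicting $\cD \forces \name{h} \in \Aut(\cG)$.

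The delicate step is the last one: verifying that the $R$-values on the $s$-tuple and the $t$-tuple are genuinely free to prescribe in the amalgam. This rests on the $\Delta$-system claim that $s_{i_j} \notin \cB_{i_k}$ whenever $j \neq k$ (and similarly for the $t$'s), which follows because $\cB_{i_j} \cap \cB_{i_k} = \cA'$ and the isomorphism $\alpha_{i_j,i_k}$ fixing $\cA'$ would force $s_{i_j} = s_{i_k}$ if $s_{i_j} \in \cA'$, contradicting distinctness; hence each $\cB_{i_k}$ meets the $s$-tuple and the $t$-tuple in exactly one coordinate, leaving $R$ on those tuples completely undetermined. The remaining verifications, namely that $\AgeK<sym,n>$ over $\LangBot = \emptyset$ is an $\w$-bounded strong \Fraisse\ class with extended strong amalgamation and that $\MBot_\kappa$ tautologically models $\ThFraisse_\w$ of the empty age, are routine.
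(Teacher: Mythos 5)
The paper does not prove this statement: it is explicitly left as a conjecture, and the discussion preceding it only explains why the authors' own machinery---\cref{Main rigidity result}, which funnels everything through the amalgamation of distinct $2$-types property and tuple-naming function symbols---breaks down for a symmetric relation of arity $n \geq 3$. So there is no proof in the paper to compare yours against; you are proposing a resolution of an open question of the paper, and I have assessed the argument on its own terms.

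The core idea---replacing the $2$-fold amalgamation over the $\Delta$-system root by an $n$-fold one, so that the rainbow tuple $(s_{i_1},\dots,s_{i_n})$ has exactly one coordinate in each petal and is therefore $R$-undetermined in the amalgam, independently of $(t_{i_1},\dots,t_{i_n})$---is sound, and it is exactly what lets you bypass the failure of \cref{Amalgamation of distinct types property} for symmetric $R$. Your justification of the step you flag as delicate is correct: $s_{i_j}\in\cA'$ would give $s_{i_j}=\alpha_{i_j,i_k}(s_{i_j})=s_{i_k}$, contradicting distinctness, so each of the two $n$-tuples meets each petal in exactly one point and neither is contained in any single condition. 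I see no gap, only two pieces of bookkeeping to reorder. First, pairwise distinctness of the $t_i$ cannot be cited before the $\Delta$-system and isomorphism-type refinements: two arbitrary $\cB_i, \cB_j$ need not be compatible, so ``$\name{h}$ is forced injective'' yields $t_i \neq t_j$ only once you know $\cB_i \cup \cB_j$ is a condition. Do the $\Delta$-system extraction and the pigeonhole on types over $\cA'$ first, then derive $t_i \neq t_j$ on the refined family, and only then run the pigeonhole giving $\{s_i\} \cap \{t_j\} = \emptyset$. Second, you should arrange $s_i, t_i \in \UnderSet{\cB_i}$ explicitly (extending $\cB_i$ if needed) before taking isomorphism types of $(\cA', \cB_i, s_i, t_i)$. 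A side remark: since $\AgeK<{sym, n}>$ as defined does not forbid $R$ on tuples with repeated entries, even a $2$-fold amalgamation separating $R(s_i, s_j, s_j, \dots, s_j)$ from $R(t_i, t_j, t_j, \dots, t_j)$ would suffice; your $n$-fold version is preferable because it also survives the restriction to genuine $n$-uniform hypergraphs. Written out with the splitting/density step (your variant of \cref{Main rigidity result: Claim 1}) in full detail, this should settle the conjecture.
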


\section{Non-Rigidity}  


While \cref{Main rigidity result} gives us several examples of ages such that any generic over an uncountable structure is rigid, there are also many examples where the generics have large automorphism groups. The following proposition gives us a collection of such ages. 

\begin{definition}
Suppose $\Lang$ is a relational language and let $\Phi$ be the collection of atomic formulas. If $\cM$ is an  $\Lang$-structure and  $a, b \in \cM$ say $\cM \models a \equiv b$ if 
\[
\cM \models (\forall \cc)\bigwedge_{\psi(x,\yy) \in \Phi} \psi(a, \cc) \leftrightarrow \psi(b, \cc). 
\]
\end{definition}

Note the following is immediate. 
\begin{lemma}
Suppose $\Lang$ is a relational language and $\cM$ is an $\Lang$-structure. Suppose $A \subseteq\cM$ is an $\equiv^{\cM}$-equivalence class. Then for any bijection $f\:A \to A$ the map $f \cup \id_{\cM \setminus A}$ is an automorphism of $\cM$. 
\end{lemma}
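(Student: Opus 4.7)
The plan is to verify the two things required of an automorphism: that $g := f \cup \id_{\cM \setminus A}$ is a bijection of $\UnderSet{\cM}$, and that it preserves every atomic (hence every quantifier-free) formula. The first is immediate: $f$ is a bijection of $A$, $\id_{\cM\setminus A}$ is a bijection of $\cM\setminus A$, and $A$ and $\cM\setminus A$ are disjoint, so $g$ is a bijection of $\cM$. Since $\Lang$ is relational, there are no function or constant symbols to preserve, so it suffices to check preservation of relations.

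Fix an $n$-ary relation symbol $R \in \Lang$ and a tuple $\aa = (a_1, \dots, a_n) \in \cM^n$; we want $\cM \models R(\aa) \leftrightarrow R(g(\aa))$. The key idea is to change one coordinate at a time. For $0 \leq k \leq n$, let $\aa^{(k)} = (g(a_1), \dots, g(a_k), a_{k+1}, \dots, a_n)$, so $\aa^{(0)} = \aa$ and $\aa^{(n)} = g(\aa)$. It is enough to show that $\cM \models R(\aa^{(k-1)}) \leftrightarrow R(\aa^{(k)})$ for each $1 \leq k \leq n$. If $a_k \notin A$ then $g(a_k) = a_k$ and there is nothing to do. If $a_k \in A$, then $f(a_k) \in A$ as well, so $a_k$ and $f(a_k) = g(a_k)$ lie in the same $\equiv^\cM$-equivalence class, giving $\cM \models a_k \equiv g(a_k)$.

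Now apply the definition of $\equiv^\cM$ to the atomic formula
\[
\psi(x, y_1, \dots, y_{n-1}) \;:=\; R(y_1, \dots, y_{k-1}, x, y_k, \dots, y_{n-1}),
\]
with the parameter tuple $\cc = (g(a_1), \dots, g(a_{k-1}), a_{k+1}, \dots, a_n)$. Since $a_k \equiv g(a_k)$, we get $\cM \models \psi(a_k, \cc) \leftrightarrow \psi(g(a_k), \cc)$, which is exactly $\cM \models R(\aa^{(k-1)}) \leftrightarrow R(\aa^{(k)})$. Chaining these equivalences for $k = 1, \dots, n$ yields $\cM \models R(\aa) \leftrightarrow R(g(\aa))$, as required. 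There is no genuine obstacle here, only the bookkeeping of changing one coordinate at a time so that the single-variable definition of $\equiv^\cM$ applies at each step.
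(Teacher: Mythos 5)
Your proof is correct, and the paper itself gives no argument at all --- it simply declares the lemma immediate --- so your careful one-coordinate-at-a-time verification (using that the definition of $\equiv^{\cM}$ quantifies over all parameter tuples and all atomic formulas, so the distinguished variable can sit in any argument position of $R$) is exactly the intended routine check, written out in full. No gaps.
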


\begin{corollary}
\label{Conditions when S_k is in the automorphism group}
Suppose $\Lang$ is a relational language and  $\cM$ is an $\Lang$-structure. If $\cM$ has an $\equiv^{\cM}$-equivalence class of size $\kappa$ then there is a subgroup of $\Aut(\cM)$ isomorphic to $\Perm(\kappa)$.  
\end{corollary}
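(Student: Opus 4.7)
The proposal is to build the required subgroup explicitly by extending each permutation of the size-$\kappa$ equivalence class by the identity on its complement. Let $A \subseteq \cM$ be an $\equiv^{\cM}$-equivalence class with $|A| = \kappa$. I would define a map
\[
\Phi : \Perm(A) \to \Aut(\cM), \qquad \Phi(f) = f \cup \id_{\cM \setminus A}.
\]
By the preceding lemma, each $\Phi(f)$ is indeed an automorphism of $\cM$, so $\Phi$ lands in $\Aut(\cM)$.

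Next, I would verify that $\Phi$ is a group homomorphism. Given $f, g \in \Perm(A)$, both $\Phi(f) \circ \Phi(g)$ and $\Phi(f \circ g)$ act as $f \circ g$ on $A$ and as the identity on $\cM \setminus A$, so they coincide as functions on $\cM$. Injectivity is equally direct: if $\Phi(f) = \Phi(g)$, then restricting to $A$ forces $f = g$. Thus the image of $\Phi$ is a subgroup of $\Aut(\cM)$ isomorphic to $\Perm(A)$, and since $|A| = \kappa$ we have $\Perm(A) \cong \Perm(\kappa)$, completing the argument.

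There is essentially no obstacle here; the content of the corollary is entirely encoded in the preceding lemma, and the only work is packaging the pointwise extensions as a faithful group action. I would keep the write-up short, simply citing the lemma for the automorphism claim and then remarking on the homomorphism and injectivity.
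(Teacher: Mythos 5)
Your proposal is correct and is exactly the argument the paper intends: the corollary is stated without proof as an immediate consequence of the preceding lemma, and your map $\Phi(f) = f \cup \id_{\cM\setminus A}$ with the routine homomorphism and injectivity checks is precisely that intended reading, just written out.
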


\begin{proposition}
Suppose 
\begin{itemize}
\item $\Lang$ is a countable relational language,

\item $\AgeK$ is a strong \Fraisse\ class which is $\w$-bounded and which has at most $\w$-many types up to isomorphism. 

\item $\cM$ is the \Fraisse\ limit of $\AgeK$ and $\equiv^{\cM}$ is not equality. 

\item for $\kappa$ an infinite cardinal, $\cN_\kappa$ is the unique structure on $\kappa$ in the empty language. 
\end{itemize}
Then if $G$ is any generic over $V$ for $\Fn<\cN_\kappa>[\AgeK](\w)$ we have $\Perm(\kappa)$ is a subgroup of $\Aut(\bigcup G)$. 
\end{proposition}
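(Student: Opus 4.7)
The plan is to construct an $\equiv^{\cG}$-equivalence class in $\cG = \bigcup G$ of size $\kappa$ and then invoke \cref{Conditions when S_k is in the automorphism group}. The main step will be a density argument combined with the standard covering lemma for ccc forcing; the principal obstacle will be ensuring that the ``twin extension'' step in the density argument yields a structure in $\AgeK$, which I handle by passing through the countable \Fraisse\ limit $\cM$.

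First, since $\equiv^{\cM}$ is not equality, pick distinct $a_0, b_0 \in \cM$ with $a_0 \equiv^{\cM} b_0$ and set $q = \qftp[\cM](a_0, b_0)$. I record the key observation: for every $\cA \in \AgeK$ and every pair $(x, y)$ in $\cA$ with $\qftp[\cA](x, y) = q$, we have $\cA \models x \equiv y$. Indeed, $\cA$ embeds into $\cM$ by \Fraisse's theorem, and by homogeneity of $\cM$ the embedding can be chosen to send $(x, y)$ to $(a_0, b_0)$; since $a_0 \equiv^{\cM} b_0$, pulling back the twin property along this embedding yields $\cA \models x \equiv y$. Iterated amalgamation inside $\cM$ also shows that $a_0$ has infinitely many distinct $\equiv^{\cM}$-twins.

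Fix $a^* \in \kappa$. For each $I \in V$ with $I \subseteq \kappa$ and $|I|^V < \kappa$ define
\[
D_I \defas \{\cA \in \Fn<\cN_\kappa>[\AgeK](\w) \st a^* \in \UnderSet{\cA} \text{ and } (\exists \alpha \in \UnderSet{\cA} \setminus I)\, \qftp[\cA](a^*, \alpha) = q\}.
\]
To see that $D_I$ is dense, take any $\cA_0$, enlarge if necessary so that $a^* \in \UnderSet{\cA_0}$, and pick $\alpha \in \kappa \setminus (\UnderSet{\cA_0} \cup I)$, which is possible since $|\UnderSet{\cA_0} \cup I| < \kappa$. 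Let $i\:\cA_0 \to \cM$ be an embedding with $i(a^*) = a_0$, and pick a twin $b' \in \cM \setminus i(\UnderSet{\cA_0})$ of $a_0$ using the observation above. Define $\cA_1$ to be the $\Lang$-structure on $\UnderSet{\cA_0} \cup \{\alpha\}$ pulled back along the bijection extending $i$ by $\alpha \mapsto b'$. Then $\cA_1 \in \AgeK$ extends $\cA_0$ with $\qftp[\cA_1](a^*, \alpha) = q$, so $\cA_1 \in D_I$.

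Set $T \defas \{\alpha \in \kappa \st (\exists \cA \in G)\, a^*, \alpha \in \UnderSet{\cA} \text{ and } \qftp[\cA](a^*, \alpha) = q\}$. Density of each $D_I$ and genericity give $T \setminus I \neq \emptyset$ for every $I \in V$ with $|I|^V < \kappa$. By \cref{Conditions which guarantee the chain condition} with $\kappa$ and $\lambda$ both taken to be $\w$, the poset $\Fn<\cN_\kappa>[\AgeK](\w)$ is ccc, so by the standard covering lemma every $X \in V[G]$ with $|X|^{V[G]} < \kappa$ is contained in some $Y \in V$ with $|Y|^V < \kappa$. If $|T|^{V[G]} < \kappa$ held, a $Y$ covering $T$ would contradict density of $D_Y$; hence $|T|^{V[G]} = \kappa$. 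Finally, applying the key observation to any $\cB \in G$ containing $a^*, \alpha$ and a given tuple $\cc$ from $\cG$ shows $\cG \models a^* \equiv \alpha$ for each $\alpha \in T$, so $T$ lies in a single $\equiv^{\cG}$-class of cardinality $\kappa$, and \cref{Conditions when S_k is in the automorphism group} delivers the conclusion.
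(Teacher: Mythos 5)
Your overall strategy (manufacture a single $\equiv$-class of size $\kappa$ in $\cG$ and quote \cref{Conditions when S_k is in the automorphism group}) is the same as the paper's, but the density argument at its core has a genuine gap: the sets $D_I$ are not dense. The step ``Let $i\:\cA_0 \to \cM$ be an embedding with $i(a^*) = a_0$'' requires $\qftp[\cA_0](a^*) = \qftp[\cM](a_0)$, and since $\Lang$ is relational this quantifier-free $1$-type is already decided by whatever condition $\cA_0$ you are handed; nothing forces it to agree with the type of $a_0$. Concretely, take $\Lang = \{P\}$ with $P$ unary and $\AgeK$ all finite $\Lang$-structures: here $a \equiv^{\cM} b$ iff $P(a) \leftrightarrow P(b)$, so $\equiv^{\cM}$ is not equality, and if $q$ asserts $P(x) \wedge P(y)$ while $\cA_0 \models \neg P(a^*)$, then no extension of $\cA_0$ lies in $D_I$ at all --- $D_I$ is empty below $\cA_0$. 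Since you fixed $a^*$ and $q$ in $V$ in advance, the generic may well contain such an $\cA_0$, and then $T$ can be empty. The argument is salvageable: first note that the conditions containing \emph{some} element realizing $\qftp[\cM](a_0)$ are dense, fix such a condition $\cA^* \in G$ and an element $a^*$ of it with that type (which, the language being relational, is preserved under all extensions), and then run your density argument for $D_I$ only below $\cA^*$; genericity below a condition in $G$ still applies. A second, smaller soft spot: you take $b'$ to be merely ``a twin'' of $a_0$, whereas what the construction needs is $\qftp[\cM](a_0,b') = q$; this does follow (twins have the same quantifier-free $2$-type over a common twin, and infinitely many realizations of $q$ with $a_0$ exist by iterating \SAP), but it deserves a sentence.

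For comparison, the paper avoids the type-bookkeeping entirely: it uses \cref{Fraisse limits exist and trivial group theoretic dcl} to see that trivial group-theoretic definable closure forces every $\equiv^{\cM}$-class to have size $1$ or be infinite, then uses \cref{Forcing satisfies Fraisse theory} to transfer ``$a$ lies in an infinite $\equiv$-class'' to a statement forced by some condition, and finally runs a density argument producing $\equiv$-twins of $a$ above every $\alpha \in \kappa$. That route never needs to commit in $V$ to a particular $2$-type $q$ or a particular element $a^*$ of $\kappa$, which is exactly what gets you into trouble. Your ccc-plus-covering step for showing $|T| = \kappa$ is fine (and is arguably more careful about singular $\kappa$ than the paper's ``unbounded, hence size $\kappa$''), but it cannot rescue a family of sets that simply fails to be dense.
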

\begin{proof}
By \cref{Fraisse limits exist and trivial group theoretic dcl} and the fact that $\AgeK$ is a strong \Fraisse\ class, $\cM$ has trivial group theoretic definable closure. Now suppose to get a contradiction $\equiv^{\cM}$ had a finite equivalence class $\{a_i\}_{i \in [n]}$ where $n > 1$. Because being in the same $\equiv^{\cM}$-equivalence class is preserved under automorphisms, if $g \in \Aut(\cM)$ and $g(a_0, \dots, a_{n-2}) = (a_0, \dots, a_{n-2})$ we must also have $g(a_{n-1}) = a_{n-1}$. However this would contradict the fact that $\cM$ has trivial group theoretic definable closure. Therefore every $\equiv^{\cM}$-equivalence class must either have size $1$ or be infinite. Furthermore, as $\equiv^{\cM}$ is not equality it must have an infinite equivalence class. 

But by \cref{Forcing satisfies Fraisse theory} we know that $\bigcup \cG$ satisfies the \Fraisse\ theory of $\AgeK$ and so is potentially isomorphic to $\cM$. Therefore there must be some $\cA \in \AgeK$ and $a \in \cA$ such that 
\[
\cA \forces[\Fn<\cN_\kappa>[\AgeK](\w)] a\text{ is in an infinite }\equiv\text{-equivalence class}. 
\]

But then for all $\alpha \in \kappa$, the collection of $\cB$ such that $\cA \subseteq \cB$ and 
\[
\cB \forces[\Fn<\cN_\kappa>[\AgeK](\w)] (\exists \beta > \alpha)\, a \equiv \beta.
\]
is dense. Therefore in $\bigcup G$ we have that $a$ is in an $\equiv$-equivalence class of size $\kappa$. 

But, by \cref{Conditions when S_k is in the automorphism group}, we then have that $\Perm(\kappa)$ is a subgroup of $\Aut(\bigcup G)$ as desired.  
\end{proof}

%
%
%
%
%

\bibliographystyle{amsnomr}
\bibliography{bibliography}

\end{document}